\theoremstyle{plain} \numberwithin{equation}{section}
\newtheorem{main}{Theorem}
\newtheorem{thm}{Theorem}[section]
\newtheorem{cor}[thm]{Corollary}
\newtheorem{prop}[thm]{Proposition}
\newtheorem{lemma}[thm]{Lemma}
\newtheorem{coro}[main]{Corollary}
\newtheorem{rmk}[thm]{Remark}
\newtheorem{ex}[thm]{Example}
\newtheorem{defn}[thm]{Definition}
\newcommand{\bi}{\begin{itemize}}
\newcommand{\ei}{\end{itemize}}
\newcommand{\bp}{\begin{proof}}
\newcommand{\ep}{\end{proof}}
\DeclareMathOperator{\spec}{Spec}
\DeclareMathOperator{\proj}{Proj}
\DeclareMathOperator{\lcm}{lcm}
\def\ra{\rightarrow}
\def\cal{\mathcal}
\def\CC{\mathbb{C}}
\def\PP{\mathbb{P}}
\def\QQ{\mathbb{Q}}
\def\ZZ{\mathbb{Z}}
\def\DD{\cal D}
\def\OO{\cal O} 
\def\XX{\cal X}
\def\VV{\cal V}
\def\SS{\cal S}
\def\WW{\cal W}
\def\TT{\cal T}
\def\s-{\setminus}
\begin{document}

\title{On the classification of ALE  K\"ahler manifolds}

\author[Hein]{Hans-Joachim Hein}
\address{        
       Department of Mathematics, Fordham University, Bronx, NY 10458, USA}
\email{hhein@fordham.edu}

\author[R\u asdeaconu]{Rare\c s R\u asdeaconu}
\address{        
        Department of Mathematics, 1326 Stevenson Center, Vanderbilt University, Nashville, TN, 37240, USA}    
        \email{rares.rasdeaconu@vanderbilt.edu}
            
\author[\c Suvaina]{Ioana \c Suvaina}
\address{        
        Department of Mathematics, 1326 Stevenson Center, Vanderbilt University, Nashville, TN, 37240, USA}
\email{ioana.suvaina@vanderbilt.edu}

\date{\today}

\keywords{ALE K\"ahler manifolds, deformations of quotient singularities}

\subjclass[2000]{Primary: 53C55, 32C18; Secondary: 14B07, 32M05.}

\begin{abstract}
The underlying complex structure of an ALE  K\"ahler manifold is exhibited as a 
resolution of a deformation of an isolated quotient singularity. As a consequence, there 
exist only finitely many diffeomorphism types of minimal ALE K\"ahler surfaces with a given group at infinity.
\end{abstract}

\maketitle

\thispagestyle{empty}

\section{Introduction}

An asymptotically locally Euclidean (ALE) K\"ahler  manifold of dimension $n$ is a complete
complex K\"ahler manifold which, outside of a compact set, is modeled by a quotient of the Euclidean space
by a finite subgroup $G$ of $U(n).$
The ALE scalar-flat K\"ahler (sfK) surfaces play a 
central r\^ole in the study of the Gromov-Hausdorff limit of constant scalar curvature 
K\"ahler surfaces. However, their classification is missing and, as Chen-LeBrun-Weber 
pointed out \cite[page 1164]{clw}, this ``seems a daunting problem".  

Simply connected ALE Ricci-flat  K\"ahler surfaces were constructed by Gibbons-Hawking 
\cite{g-h}, Hitchin \cite{hitchin}, and  
Kronheimer \cite{kron2},  while non-simply connected examples are provided by free quotients of 
some of Hitchin's \cite{hitchin} (see also \cite{ALE}). 
 Kronheimer \cite{kron3}, 
\c Suvaina \cite{ALE}  and Wright \cite{wright} showed that these constructions yield the complete list of  ALE 
Ricci-flat K\"ahler surfaces. 
In higher dimensions, 
ALE Ricci-flat K\"ahler metrics were constructed by Calabi \cite{calabi1}, Joyce \cite{joyce2} 
and Tian-Yau \cite{ty2}. There are 
currently many known non-Ricci-flat ALE sfK metrics. In complex dimension $2$, such 
metrics were constructed by LeBrun  \cite{lebrun-mass,  lebrun-ansatz}, Joyce \cite{joyce}, 
Calderbank-Singer \cite{ca-si},
Honda \cite{honda1, honda2},  Lock-Viaclovsky \cite{lock-viaclovsky} and Han-Viaclovsky \cite{h-v-def}. 
Very recently, in a paper \cite{h-v-existence} which appeared after this article was posted, 
Han and Viaclovsky showed that the minimal smoothings of a cyclic quotient singularity
admit ALE sfK metrics. Non-Ricci-flat ALE sfK metrics in higher dimensions 
were found by  Simanca \cite{simanca} 
and  Apostolov-Rollin \cite{a-r}. A common feature of all of the 
known examples of ALE sfK manifolds is that  the underlying complex structure is a 
resolution of a deformation of a quotient singularity. 
We address here the classification of ALE sfK  manifolds by placing it in the broader context of 
ALE K\"ahler manifolds and show that this common feature is in fact a general phenomenon.

\begin{main}
\label{DiffTypeALE}
Any ALE K\"ahler manifold asymptotic to $\CC^n/G$ is biholomorphic to a resolution 
of a deformation of the isolated quotient singularity $(\CC^n/G,0).$
\end{main}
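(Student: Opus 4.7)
The plan is to realise $X$ as a resolution of an affine variety $Y$ built from polynomial-growth holomorphic functions on $X$, and to exhibit $Y$ as a flat deformation of $(\CC^n/G,0)$ via the Rees construction applied to the polynomial-growth filtration.

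First, I would extend enough invariants. Let $f_1,\dots,f_N\in \CC[z_1,\dots,z_n]^G$ be a set of generators of the invariant ring, of weighted degrees $d_1,\dots,d_N$. Via the ALE asymptotic identification $\Psi$ between $(\CC^n\setminus B_R)/G$ and the end of $X$, each $f_i$ pulls back to a holomorphic function on the end with polynomial growth of order $d_i$. The aim is to extend each $\Psi_* f_i$ to a global holomorphic function $\tilde f_i$ on $X$ satisfying $\tilde f_i-\Psi_* f_i=O(r^{d_i-\delta})$ at infinity for some uniform $\delta>0$. To do so, cut $\Psi_* f_i$ off to a smooth function $F_i$ on $X$ and solve $\bar\partial u_i=\bar\partial F_i$ using Hörmander's weighted $L^2$-estimates on the complete Kähler manifold $X$, with respect to a plurisubharmonic exhaustion modelled on $r^2$ at infinity and an arbitrary completion on the core. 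Setting $\tilde f_i:=F_i-u_i$ gives the required extension; the construction applies verbatim to every $G$-invariant polynomial.

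Next, filter the ring $R$ of polynomial-growth holomorphic functions on $X$ by growth rate, and consider $\operatorname{gr}(R)=\bigoplus_d R_{\le d}/R_{<d}$. The asymptotic identification induces a ring homomorphism $\operatorname{gr}(R)\to \CC[z]^G$ sending a class to its leading asymptotic term. It is injective, because a holomorphic function on $(\CC^n\setminus B)/G$ of polynomial growth $\le d$ admits a Laurent-type expansion whose positive part is a $G$-invariant polynomial of degree $\le d$, so vanishing of the degree-$d$ piece forces strictly lower growth; it is surjective by the extension step. Hence $\operatorname{gr}(R)\cong \CC[z]^G$, so $R$ is finitely generated, and the Rees algebra associated with the filtration defines a flat family $\mathcal{Y}\to\AA^1$ with central fiber $\spec \operatorname{gr}(R)=\CC^n/G$ and generic fiber $Y=\spec R$. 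This realises $Y$ as a one-parameter deformation of the quotient singularity. The map $\Phi=(\tilde f_1,\dots,\tilde f_N)\colon X\to\CC^N$ factors through a proper holomorphic $\pi\colon X\to Y$, and the asymptotic matching $\tilde f_i\approx \Psi_* f_i$ makes $\pi$ a biholomorphism outside a compact set, collapsing a compact analytic subset of $X$ to the unique singular point of $Y$. Thus $\pi$ identifies $X$ with a resolution of the deformation $Y$.

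The main obstacle is the extension step. Only the Kähler condition is available, with no curvature hypothesis, so the $\bar\partial$-solution must be produced purely from a plurisubharmonic exhaustion together with completeness. Critically, one must guarantee a strictly smaller polynomial order for $u_i$ than for $F_i$, rather than merely a bound by the same power of $r$. Choosing Hörmander weights of the form $(2n+\varepsilon)\log(1+r^2)$ on the end and exploiting the structure of the ALE model should yield the definite decay gain $\delta$, but carrying out this weighted $\bar\partial$-theory on a general ALE Kähler manifold in a way that controls leading asymptotics is the technical heart of the argument.
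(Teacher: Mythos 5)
Your strategy is genuinely different from the paper's: the paper never solves a $\bar\partial$-equation on $M$; it invokes the Hein--LeBrun/Li divisorial compactification $(X,D)$ with $D\simeq\PP^{n-1}/(G/G_0)$, proves $X$ is Moishezon and then projective, shows $R(X',D')\simeq R(D',D'|_{D'})[S]$, and degenerates $X'\setminus D'$ to $\spec R(D',D'|_{D'})=\CC^n/G$ via Pinkham's ``sweeping out the cone'' construction. Your second half is actually fine and is essentially equivalent to that last step: the Rees-algebra family attached to the growth filtration is the affine chart $(Z\neq 0)$ of the paper's family $(S-tZ=0)$, so once you know $R$ is finitely generated with $\operatorname{gr}(R)\simeq\CC[z]^G$, the flat degeneration to $\CC^n/G$ and the identification of $M\to\spec R$ as a resolution go through.

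The genuine gap is the extension step, and it is not merely technical. Hörmander's method requires a weight $\varphi$ with $i\partial\bar\partial\varphi\geq c\,\omega$ \emph{globally}, but a general ALE K\"ahler manifold contains compact complex curves (the exceptional set of the eventual resolution), so no global strictly plurisubharmonic function exists; a weight that is strictly psh only outside a compact set yields finite-dimensionality of the relevant cohomology, not vanishing, and the class $[\bar\partial F_i]$ could a priori be a nonzero obstruction. Killing exactly this obstruction is the content of Proposition 3.8\,i) of the paper ($H^1(X,\OO_X)=0$), whose proof uses the Hodge decomposition on the Moishezon orbifold $X$ together with the family of rational curves sweeping out a neighborhood of $D$ --- i.e., it uses the compactification you are trying to avoid. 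A second, smaller gap: your injectivity/surjectivity argument for $\operatorname{gr}(R)\to\CC[z]^G$ assumes that a polynomial-growth holomorphic function on the end has a Laurent-type expansion with $G$-invariant polynomial leading part, but the complex structure on the end is only \emph{asymptotic} to the standard one (and for $n=2$ it genuinely need not be standard near infinity --- that is why Theorem A allows nontrivial deformations), so this expansion itself requires the asymptotic analysis that underlies the Hein--LeBrun/Li compactification. As written, the proposal therefore does not close; it would become a valid alternative proof only if you first establish the weighted $\bar\partial$-solvability with decay gain on general ALE K\"ahler manifolds, which is an open-ended analytic problem rather than a routine application of H\"ormander.
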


\begin{rmk}
In his book, Joyce \cite[Section 8.9]{joyce-book} claims that such a result would follow from the 
results previously discussed in Section 8 of {\it{op.cit.}} However, a proof is not given.
\end{rmk}

In dimension at least $3$, by Schlessinger's theorem  \cite{schless},
isolated quotient singularities are rigid under deformation.
Hence, Theorem \ref{DiffTypeALE} immediately implies:

\begin{coro}
\label{ALEsfK}
If $n\geq 3,$ every ALE K\"ahler manifold asymptotic to $\CC^n/G$ is biholomorphic 
to a resolution of the isolated singularity $\CC^n/G.$
\end{coro}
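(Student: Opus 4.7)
The plan is to combine Theorem \ref{DiffTypeALE} with Schlessinger's rigidity result \cite{schless} for isolated quotient singularities in dimension at least three, as foreshadowed in the paragraph preceding the statement. The argument reduces to two clean steps.

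First, I would apply Theorem \ref{DiffTypeALE} to realize $M$ as a resolution $\pi\colon M \to Y$ of some deformation $(Y,0)$ of the germ $(\CC^n/G,0)$. This reduces the corollary to the purely local claim that, when $n \geq 3$, any analytic deformation of the germ $(\CC^n/G,0)$ is isomorphic to the trivial one, so that $(Y,0)\cong (\CC^n/G,0)$ as germs and $M$ is a resolution of $\CC^n/G$ itself.

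Second, I would invoke Schlessinger's theorem: for $n \geq 3$ the isolated quotient singularity $\CC^n/G$ is rigid, i.e.\ $T^1_{(\CC^n/G,0)} = 0$, which is the tangent space to the base of the (semi)universal deformation. Hence the semiuniversal deformation space is a reduced point, and every deformation of $(\CC^n/G,0)$ is analytically trivial. Applied to the germ $(Y,0)$ obtained in the first step, this gives $(Y,0)\cong(\CC^n/G,0)$, and composing with $\pi$ exhibits $M$ as a resolution of the standard quotient singularity.

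The only point deserving any care—and the step I would want to verify rather than the actual rigidity input, which is black-boxed from \cite{schless}—is that the ``deformation'' produced by Theorem \ref{DiffTypeALE} is genuinely an analytic deformation of the germ $(\CC^n/G,0)$, in the sense Schlessinger needs, rather than merely a global deformation of some partial compactification with compatible asymptotics. Once the local-germ nature of the deformation is made explicit (as is already implicit in the statement of Theorem \ref{DiffTypeALE}), the corollary follows with no further work; there is no substantive obstacle beyond this bookkeeping.
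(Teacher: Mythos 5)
Your overall strategy---Theorem \ref{DiffTypeALE} followed by Schlessinger's rigidity theorem---is exactly the paper's. However, there is a genuine gap at the point where you pass from the germ back to the global statement. Schlessinger's result gives triviality of deformations of the \emph{germ} $(\CC^n/G,0)$, so from it you may only conclude that the affine variety $Y=X'\setminus D'$ produced by Theorem \ref{DiffTypeALE} has a singular point whose germ is isomorphic to $(\CC^n/G,0)$. That is strictly weaker than what the corollary asserts: one needs $Y\simeq \CC^n/G$ as affine varieties, not merely an isomorphism of germs at one point, before ``composing with $\pi$'' exhibits $M$ as a resolution of the standard quotient. As written, your last step silently conflates these two statements; a priori $Y$ could be an affine variety that is locally modelled on $\CC^n/G$ near its singular point but looks quite different elsewhere. (Your own ``point deserving care'' is aimed in the opposite, and unproblematic, direction: restricting the global family to a deformation of the germ is immediate; it is the converse promotion from local to global triviality that requires an argument.)

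The paper closes this gap by exploiting the $\CC^*$-equivariance built into the ``sweeping out the cone'' construction of Section \ref{normal-cone}: the family $\phi\colon \YY\setminus\mathcal D\to\CC$ is compatible with the natural $\CC^*$-action on the cone $\spec R(D',D'|_{D'})\simeq\CC^n/G$, and combining this equivariance with rigidity of the germ upgrades the local statement to a global isomorphism $X'\setminus D'\simeq\CC^n/G$; the paper points to \cite[Footnote 1]{conlon-hein-3} for the details of exactly this step. So the missing ingredient in your argument is the equivariance (or some substitute, such as a direct computation showing that all \emph{global} deformations of the affine cone $\CC^n/G$ are trivial for $n\geq 3$). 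With that added, your proof coincides with the paper's.
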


\begin{rmk}
This recovers 
and refines the rigidity results of Hein-LeBrun in  \cite[Section 2]{hein-lebrun} via a 
different method. In this direction, see also the very closely related results of 
Hamilton \cite[page 1--2]{hamilton} and Tian \cite[Section 4]{tian}.
\end{rmk}

An important idea in algebraic geometry, which goes back to Zariski \cite{zar}, 
is that the finite generation of rings of sections can be used to derive information about birational 
models of algebraic varieties. The proof of Theorem \ref{DiffTypeALE} is yet another illustration of this idea.
Any ALE K\"ahler manifold admits a natural divisorial analytic compactification due to Li \cite{chili} and Hein 
and LeBrun \cite{hein-lebrun}. The divisor at infinity is isomorphic to 
$\PP^{n-1}/G,$ 
and the ring of sections of its normal bundle is isomorphic to the ring of invariants  
of the isolated quotient singularity $\CC^n/G$. This suffices to show that the starting 
ALE K\"ahler manifold is in fact a resolution of an affine algebraic variety. 
To identify this affine algebraic variety we appeal to the ``sweeping out the 
cone with hyperplane sections" technique of Pinkham \cite{pink}.

The deformation theory of isolated quotient singularities in complex dimension $2$
is well-understood and it allows us to prove the following result:

\begin{main}
\label{finiteness}
For every finite subgroup $G\subset U(2)$ containing no reflections, 
there exist only finitely many diffeomorphism types underlying minimal  ALE 
K\"ahler surfaces which are asymptotic to $\CC^2/G.$ 
\end{main}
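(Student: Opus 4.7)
The plan is to combine Theorem~\ref{DiffTypeALE} with classical facts about the deformation theory of two-dimensional quotient singularities. By Theorem~\ref{DiffTypeALE}, any ALE K\"ahler surface $M$ asymptotic to $\CC^2/G$ is isomorphic to a resolution of an affine variety $X$ that is a deformation of $(\CC^2/G, 0)$. Since $M$ is minimal (no $(-1)$-curves), the map $M \to X$ must be the minimal resolution of $X$: otherwise, in a factorization $M \to \tilde{X} \to X$ through the minimal resolution $\tilde{X}$, the final blow-up would produce a $(-1)$-curve on $M$, contradicting minimality. It therefore suffices to show that the minimal resolutions of all deformations of $(\CC^2/G, 0)$ realize only finitely many diffeomorphism types.

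To this end, I would first invoke the classical result that the versal deformation space $\Def(\CC^2/G, 0)$ of a two-dimensional quotient surface singularity has only finitely many irreducible components. For $G \subset SU(2)$ (rational double points) the space is smooth and irreducible, by Brieskorn and Tjurina; for general $G \subset U(2)$ this is Koll\'ar and Shepherd-Barron's classification of deformation components via $P$-resolutions, building on earlier work of Riemenschneider, Arndt, and Wahl. Over each such irreducible component, the equisingular loci --- points whose fibers share a common singularity type --- form a finite constructible stratification of the base.

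Within each stratum I would then establish that the minimal resolutions of the corresponding fibers all have the same diffeomorphism type. This rests on a simultaneous resolution argument: for rational double points one applies Brieskorn's simultaneous resolution theorem directly (after a finite base change); for general quotient singularities one can argue topologically by carving out small neighborhoods of each singular point, resolving them locally, and gluing back to the smooth complement via Ehresmann's fibration theorem. The asymptotic geometry of $X$ being fixed as $\CC^2/G$ then pins down the global diffeomorphism type of the minimal resolution up to the finite choice of stratum, giving the desired finiteness.

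The main technical obstacle I anticipate is executing the simultaneous resolution step uniformly across equisingular families whose generic fibers need not be smooth, such as components of $\Def(\CC^2/G, 0)$ whose generic fibers carry residual cyclic quotient singularities of class $T$. One must ensure that the minimal resolution varies continuously in such families so that Ehresmann's theorem applies. This can be handled either analytically, via equivariant/simultaneous resolution after a suitable base change, or purely topologically by the local gluing argument indicated above.
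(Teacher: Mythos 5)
Your proposal is correct and follows the same overall strategy as the paper: Theorem~\ref{DiffTypeALE} reduces the problem to deformations of $(\CC^2/G,0)$; minimality forces $M$ to be the minimal resolution, which is unique for quotient singularities; and finiteness ultimately rests on the fact that the miniversal deformation space of a two-dimensional quotient singularity has finitely many components, each quasi-projective, by Koll\'ar--Shepherd-Barron \cite{ksb}. The one step you implement differently is the constancy of the topology along strata. The paper never attempts a resolution in families: it applies the Whitney--Thom theory of complex analytic conelike stratifications \cite{g-m} to the miniversal family $\Phi\colon \VV\to\SS$, which is topologically locally trivial relative to a stratification with finitely many strata, and this bounds the number of topological types of the \emph{singular} fibers $X'\setminus D'$ directly; the passage to minimal resolutions is then handled by noting that the residual singularities are again quotient singularities (Esnault--Viehweg) and that the minimal resolution of a quotient singularity is unique. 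Your route instead pushes the topological triviality up to the resolved family, which is exactly where your flagged difficulty lives: simultaneous resolution in the strong sense is not available for general equisingular families of rational singularities, so the Brieskorn--Tjurina argument only covers the $SU(2)$ case. Your fallback --- excising neighborhoods of the singular points, resolving locally, and gluing into an Ehresmann trivialization of the complement --- is the correct way to close that gap, and is in substance what the paper's combination of stratified local triviality with uniqueness (tautness) of minimal resolutions of quotient singularities amounts to. Both arguments close; the paper's version is slightly more economical in that the resolution is only ever taken fiberwise at the end.
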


By a minimal surface we mean a smooth complex surface 
 which does not contain any holomorphic sphere of self-intersection 
$-1$.
In contrast to dimension $2$, in higher dimensions it is a difficult task 
to impose a minimality condition on a resolution.  For this reason 
we do not attempt here to explore a generalization of Theorem 
\ref{finiteness} to higher dimensions. 

The construction used in the  proof of Theorem \ref{DiffTypeALE} also allows us to 
give a new proof of the characterization of asymptotically Euclidean K\"ahler 
manifolds, i.e., of ALE K\"ahler manifolds for which the group at infinity is trivial, 
a result originally due to Hein and LeBrun 
\cite[Proposition 2.12 and Proposition 4.3]{hein-lebrun}.

\begin{coro}
\label{AEK}
Any asymptotically Euclidean  K\"ahler manifold $M$ of complex dimension 
$n$ admits a proper holomorphic map onto $\CC^n$ that is an isomorphism 
away from compact subsets of $M$ and $\CC^n$.
\end{coro}

\subsection*{Acknowledgements} The first author is partially supported by 
NSF grant DMS-1745517. The second author acknowledges the support 
of the Simons Foundation's "Collaboration Grant for Mathematicians", 
Award \#281266, while the the third author is partially supported by the 
NSF grant DMS-1710970. The authors are thankful to Lucian B\u adescu, 
Claude LeBrun and Jeff Viaclovsky for their comments on the paper, and to J\'anos Koll\'ar 
for his help with the proof of Theorem \ref{finiteness}. 
The third author would like to thank IHP for hospitality where this project started.

\section{Divisorial analytic compactifications of ALE K\"ahler manifolds}
\label{1}
 
There exist several definitions of an ALE K\"ahler manifold in the literature,
each of them imposing various fall-off conditions on the metric at infinity. 
We are working in the following setting:

\begin{defn}
\label{ALE}
A complete K\"ahler manifold $(M, g, J)$ of complex dimension $n$ is said to be 
asymptotically locally Euclidean (ALE) K\"ahler if there exists a compact 
subset $K\subset M$ such that $M\setminus K$ has finitely many 
connected components $(M\setminus K)_i$, $i\in I,$ and for each $i \in I$ there exists a finite subgroup $G_i\subset U(n)$ acting freely on $\CC^n\setminus B_R(0),$ $B_R(0)=\{x\in\CC^n: |x|\leq R\}$,
and a {\it diffeomorphism} 
$$
\pi_i:(M\setminus K)_i\to \left(\CC^n\setminus B_R(0)\right)/G_i,
$$ 
such that for some $\tau>0$,
\begin{equation*}
\label{fall-off-conditions}
(\pi_i)_*g-g_{\CC^n}\in C^{2,\alpha}_{-\tau}~\text{and}~(\pi_i)_*J-J_{\CC^n}\in C^{2,\alpha}_{-\tau}.
\end{equation*}
\end{defn}

Here, $g_{\CC^n}$ and $J_{\CC^n}$ denote the standard flat K\"ahler structure on $\CC^n$. Moreover, the weighted H\"older space $C^{2,\alpha}_{-\tau}$
consists of $C^{2,\alpha}$ functions $f$ such that
$$
\left( \sum_{j=0}^2 |x|^j |\nabla^j f(x)|\right)
+|x|^{2+\alpha}[\nabla^2 f ]_{C^{0,\alpha}( B_{|x|/10}(x))}
= O(|x|^{-\tau}).
$$
This definition can naturally be extended to tensor fields.

The connected components of $M\setminus K$ are called ends. 
Hein and LeBrun prove in \cite[Proposition 2.5 and Proposition 4.2]{hein-lebrun} 
that an ALE K\"ahler manifold has in fact only one end. (This is done under stronger 
fall-off assumptions at infinity than we have imposed here, but the alternative proofs 
mentioned in \cite[page 195, Remark]{hein-lebrun} actually only require the weaker 
fall-off rate $C^1_{-\tau}$, $\tau > 0$, for $g$ and $J$.)  For the rest of the 
presentation, we will tacitly assume that $M$ has only one end.  

Our definition for ALE K\"ahler manifolds is inspired by the work on compactifications of 
asymptotically conical manifolds of Li \cite{chili} (see also \cite{hhn} and \cite{conlon-hein-3}).
The stronger fall-off conditions at infinity in  \cite{hein-lebrun}, where $\tau$ is required to be larger than 
$n-1$, are necessary for the definition of the ADM mass, but can be removed from the construction 
of the compactification. 

\subsection{A compactification of ALE K\"ahler manifolds}
\label{compact-ALE}

Let $(M,g,J)$ be an ALE K\"ahler manifold of complex dimension $n\geq 2$ in the sense of Definition \ref{ALE}. Let $M_{\infty}$ 
be the end of $M,$ and let $\widetilde M_{\infty}$ be the universal cover of $M_{\infty}.$ Then, after a $C^\infty$ identification of $\widetilde M_{\infty}$ with $\CC^n\setminus B_R(0)$, the tensors $g,J$ converge to their Euclidean counterparts at rate  $C^{2,\alpha}_{-\tau}$.  Here, the parameter $\tau>0$ is arbitrary.

For $\tau>n-1,$ Hein and LeBrun show \cite[Lemma 2.1 and Lemma 4.1]{hein-lebrun} that there 
exists a complex manifold $\cal X$ of dimension $n$ containing an embedded complex 
hypersurface $\Sigma\simeq \CC\PP^{n-1},$ with normal bundle of degree $+1,$ such that $\widetilde M_{\infty}$ is biholomorphic to $\cal X\setminus \Sigma.$ 
The same statement can be proved more generally for arbitrary values of the parameter $\tau>0$ by using the method of Haskins-Hein-Nordstr\"om \cite{hhn}, 
which was already used in \cite{hein-lebrun} to treat the case $n=2$ and $1<\tau\leq 3/2.$ 
The method of Li \cite{chili} yields the same result assuming $C^{2n}_{-\tau}$ instead of 
$C^{2,\alpha}_{-\tau}$ fall-off of $g$ and $J$, again for an arbitrary $\tau>0$. 
In fact, it is natural to expect that $C^{1,\alpha}_{-\tau}$ for any $\tau > 0$ suffices in 
all dimensions, i.e., that the $C^{2,\alpha}_{-\tau}$ condition of Definition \ref{ALE} can 
be relaxed to $C^{1,\alpha}_{-\tau}$ for our purposes. However, the methods available in the literature 
do not seem to be sufficient for such a strengthening.

Since $H^1(\CC\PP^{n-1},\OO_{\CC\PP^{n-1}}(1)) = 0,$ standard 
deformation theory arguments \cite{kodaira} imply that there is a 
complete analytic family of compact complex submanifolds of dimension 
$h^0(\CC\PP^{n-1},\OO_{\CC\PP^{n-1}}(1)) = n,$ which represents all 
small deformations of $\Sigma \subset \XX$ through compact complex 
submanifolds. Since $\CC\PP^{n-1}$ is rigid under deformations and 
$h^{0,1}(\CC\PP^{n-1}) = 0,$ we may assume, by shrinking the size 
of the family if necessary, that every submanifold in the family is 
biholomorphic to $\CC\PP^{n-1},$ and has normal bundle 
$\OO_{\CC\PP^{n-1}}(1).$ The union of these submanifolds fills out 
some open neighborhood $\cal U$ of $\Sigma$ in $\XX.$ If $n\geq 3,$ such a  
neighborhood  is in fact biholomorphic to a neighborhood of the hyperplane
$\CC\PP^{n-1}\subset \CC\PP^{n}$ 
\cite[Lemma 2.2]{hein-lebrun}. In dimension $2$, only a weaker result 
holds. Namely, the first infinitesimal neighborhood of $\Sigma\subset \XX$ 
is isomorphic to the first infinitesimal neighborhood of 
$\CC\PP^{1}\subset \CC\PP^2,$ i.e., the normal bundle of 
$\Sigma$ in $\XX$ 
is isomorphic to the normal bundle of a line in 
$\CC\PP^2.$ Hein and LeBrun prove a stronger result 
\cite[Lemma 4.4]{hein-lebrun}, but this isomorphism between the first 
infinitesimal neighborhoods suffices for our purpose in all dimensions. 

The action of $G$ extends continuously to $\XX$ and since it is holomorphic 
on the complement of $\Sigma,$ the induced action is actually holomorphic,
and it sends $\Sigma$ to itself. Moreover, we may assume that the open set 
$\cal U\subset \XX$ is $G$-invariant. Since the automorphism group of the 
first infinitesimal neighborhood of $\Sigma$ in $\XX$ is isomorphic to  the 
automorphism group of the first infinitesimal neighborhood of 
$\CC\PP^{n-1}\subset \CC\PP^n,$ which is $GL(n;\CC),$ the action of the 
finite group $G$ on $T\XX|_{\Sigma}$ is realized by that of a finite subgroup of 
the maximal compact subgroup $U(n)\subset GL(n;\CC).$

Let $G_0\subset G$ 
be the normal subgroup corresponding to the center $U(1)\subset U(n).$ This is exactly the subgroup of $G$ whose action on $\cal U$ fixes every point of $\Sigma$. The 
quotient $\overline{\cal U}=\cal U/G_0$ may then be viewed as a smooth manifold in such a way that the quotient map 
$r:\cal U\ra \overline{\cal U}$ is a branched cyclic cover ramified along $\Sigma.$ By construction, the fixed locus of the induced action of $G/G_0$ on $\overline{\cal U}$ is a proper complex subvariety of $\overline{\Sigma} = r(\Sigma) 
\subset \overline{\cal U}$, i.e., it has codimension at least $2$ in $\overline{\cal U}$. Moreover, the isotropy groups of the action of $G/G_0$ on $\overline{\cal U}$ are cyclic; indeed, they must inject into the structure group, $U(1)$, of the complex line bundle $N_{\overline{\Sigma}|\overline{\cal U}}$ because otherwise the fixed locus of $G/G_0$ in $\overline{\mathcal{U}}$ would contain $2$-disks transverse to $\overline{\Sigma}$.

Let 
$\widehat{\cal U}={\overline{\cal U}}/(G/G_0)$,
let $q: {\overline{\cal U}}\ra \widehat{\cal U}$ be
the quotient map, and let $D\subset \widehat{\cal U}$ be the image 
of $\overline{\Sigma} \subset \overline{\cal U}.$ The composition map  $\pi: \cal U\ra \widehat{\cal U}$ 
given by $\pi=q\circ r$ is an orbifold chart 
around the divisor $D.$ We can now compactify $M$ as a complex space $X,$ by 
gluing $\widehat{\cal U}$ to $M.$ The boundary
$X\setminus M$ is the divisor 
$D$, which is isomorphic to $\CC\PP^{n-1}/(G/G_0)$ as a complex orbifold. 
Furthermore, the space $X$ has at most quotient singularities in 
codimension 2, and its singular locus is contained in $D.$

In general, as $X$ may have singularities along $D,$ the sheaf $\OO_X(D)$ is not locally free, 
but is merely an orbi-line bundle \cite{baily}, and the same is true for $N_{D|X}\simeq \OO_D(D).$ 
However, this is not a serious obstacle, as will be seen later. For the moment, notice that since 
$N_{\Sigma|\XX}\simeq N_{\CC\PP^{n-1}|\CC\PP^n}\simeq \OO_{\CC\PP^{n-1}}(1),$ 
the  orbi-line bundle $N_{D|X}\simeq \OO_D(D)$ is ample, i.e., it admits 
a hermitian metric of positive curvature. When 
$n\geq 3,$ this implies that  $X$ is a K\"ahler orbifold \cite[Lemma 2.4]{hein-lebrun}. If 
$n=2,$ since $D^2>0$, it follows from \cite[Chapter IV, Theorem 6.2]{bhpv} that $X$ is projective. 

The action of $G$ on $T\XX|_{\Sigma}$ induces 
an action on the section ring of the normal bundle,
$R(\Sigma,\OO_{\Sigma}(\Sigma))\simeq R(\CC\PP^{n-1},\OO_{\CC\PP^{n-1}}(1)),$  
where 
$$
R(\Sigma,\OO_{\Sigma}(\Sigma))=\bigoplus_{k\geq 0} 
H^0(\Sigma, N_{\Sigma|\XX}^{\otimes k})
$$ 
and 
$$
R(\CC\PP^{n-1},\OO_{\CC\PP^{n-1}}(1))=\bigoplus_{k\geq 0} 
H^0(\CC\PP^{n-1},\OO_{\CC\PP^{n-1}}(k)).
$$
Therefore, there exists an isomorphism of invariant subrings 
$$
R(\Sigma,\OO_{\Sigma}(\Sigma))^G\simeq R(\CC\PP^{n-1},
\OO_{\CC\PP^{n-1}}(1))^G.
$$
We define the section ring of $\OO_D(D)$ as  
$$
R(D, \OO_{D}(D)):=\bigoplus_{k\geq 0} H^0(D,\OO_{D}(kD)).
$$
Then 
$
R(D,  \OO_{D}(D))\simeq R(\Sigma,\OO_{\Sigma}(\Sigma))^G,
$
and since $R(\CC\PP^{n-1},\OO_{\CC\PP^{n-1}}(1))$ is the polynomial ring 
$\CC[z_1,\dots,z_n]$ in $n$ variables, we have an isomorphism
\begin{equation}
\label{normal-ring}
R(D,  \OO_{D}(D))\simeq 
\CC[z_1,\dots,z_n]^G,
\end{equation}
the latter being the coordinate ring of the isolated quotient 
singularity $\CC^n/G.$

The following theorem summarizes  the  discussion above: 
\begin{thm}
\label{summary}
Let $M$ be an ALE K\"ahler manifold of complex dimension $n$, asymptotic at 
infinity to $\CC^n/G,$ where $G\subset U(n)$. Then $M$ has only one end and 
there exists an $n$-dimensional compact K\"ahler orbifold $X$ and a connected suborbifold divisor
$D\subset X$ with the following properties:
\begin{itemize}
\item[i)] $M=X\setminus D.$ 
\item[ii)] $D$ is isomorphic to $\CC\PP^{n-1}/(G/G_0)$ as a complex orbifold.
\item[iii)] The normal orbi-bundle $N_{D|X}\simeq \OO_D(D)$  is ample and its 
ring of sections $R(D, \OO_D(D))$ is isomorphic to the coordinate ring of the 
isolated quotient singularity $\CC^n/G.$
\end{itemize}
\end{thm}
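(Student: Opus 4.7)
The statement is explicitly labeled a summary of the preceding discussion, so my plan is to assemble the ingredients already laid out in Section 2 in the correct order and check that everything fits together. First I would invoke the Hein--LeBrun result cited in Section 2.1 to reduce to the case where $M$ has a single end $M_\infty$. Passing to the universal cover $\wt M_\infty$ and applying the compactification theorem of Hein--LeBrun (with the extension to arbitrary $\tau>0$ via Haskins--Hein--Nordstr\"om, or Li's alternative under slightly stronger fall-off), I would obtain a complex manifold $\XX$ of dimension $n$ containing an embedded hypersurface $\Sigma\simeq\CC\PP^{n-1}$ with $N_{\Sigma|\XX}\simeq\OO_{\CC\PP^{n-1}}(1)$, so that $\wt M_\infty\cong\XX\setminus\Sigma$. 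This is the analytic input and I would treat it as a black box.

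Next I would construct the orbifold $X$. Using the complete deformation family of $\Sigma$ in $\XX$ (guaranteed by $H^1(\CC\PP^{n-1},\OO(1))=0$, rigidity of $\CC\PP^{n-1}$, and $h^{0,1}(\CC\PP^{n-1})=0$), I fill out a $G$-invariant open neighborhood $\cal U$ of $\Sigma$ by $\CC\PP^{n-1}$'s with normal bundle $\OO(1)$. The $G$-action on $\wt M_\infty$ extends continuously across $\Sigma$ and, being holomorphic off a divisor, extends holomorphically to $\cal U$; on $T\XX|_\Sigma$ it is realized by a finite subgroup of $GL(n;\CC)$ which I may conjugate into $U(n)$. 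I then split off the central subgroup $G_0\subseteq G$ acting by scalars: $\cal U\to\cal U/G_0$ is a cyclic branched cover ramified along $\Sigma$, so $\cal U'=\cal U/G_0$ is smooth, and $\check{\cal U}=\cal U'/(G/G_0)$ has only codimension-two quotient singularities along the image $D$ of $\Sigma$. Gluing $\check{\cal U}$ onto $M$ along the end yields the compactification $X$ with $X\setminus M=D\simeq\CC\PP^{n-1}/(G/G_0)$, giving (i) and (iii).

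For (ii), I would observe that the orbi-line bundle $N_{D|X}\simeq\OO_D(D)$ inherits a hermitian metric of positive curvature from the ample bundle $N_{\Sigma|\XX}\simeq\OO_{\CC\PP^{n-1}}(1)$ upstairs, hence is ample as an orbi-line bundle. For the section ring, sections of $\OO_D(kD)$ pull back to $G$-invariant sections of $\OO_\Sigma(k\Sigma)$, and the representation of $G$ on $H^0(\Sigma,N_{\Sigma|\XX})$ agrees with the standard $U(n)$-action on $\CC^n$ coming from the linearization on $T\XX|_\Sigma$. Hence $R(D,\OO_D(D))\simeq R(\Sigma,\OO_\Sigma(\Sigma))^G\simeq\CC[z_1,\dots,z_n]^G$, which is the coordinate ring of $\CC^n/G$.

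Finally I would verify that $X$ is K\"ahler: for $n\ge 3$ this follows directly from the ampleness of $N_{D|X}$ together with Hein--LeBrun's orbifold criterion (Lemma 1.4 in \cite{hein-lebrun}), while for $n=2$ I use instead that $D^2>0$ and cite the standard surface-theoretic result (Chapter IV, Theorem 6.2 of \cite{bhpv}) to conclude that $X$ is projective. The step I expect to require the most care is the orbifold bookkeeping around $G_0$: one has to check that taking the cyclic branched cover followed by the codimension-two quotient really produces a smooth orbifold structure compatible with both the K\"ahler metric near infinity and the section-ring computation. Everything else is a matter of stitching together the facts already recorded before the statement.
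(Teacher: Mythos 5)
Your proposal is correct and follows essentially the same route as the paper, which itself presents Theorem \ref{summary} as a summary of the preceding discussion: the Hein--LeBrun/Li compactification of the universal cover of the (single) end, the $G$-invariant neighborhood swept out by deformations of $\Sigma$, the two-step quotient by $G_0$ and $G/G_0$ glued onto $M$, the ampleness of $N_{D|X}$ with section ring $\CC[z_1,\dots,z_n]^G$, and the K\"ahler/projectivity statements via \cite[Lemma 1.4]{hein-lebrun} for $n\ge 3$ and \cite[Chapter IV, Theorem 6.2]{bhpv} for $n=2$. No gaps beyond those the paper itself delegates to the cited references.
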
 

\subsection{Algebro-geometric properties of the compactification}

The compactification constructed above exhibits several features which 
will play a key role in the proof of Theorem \ref{DiffTypeALE}. 
Closely related results were proved by Conlon-Hein in \cite[Section 2.3]{conlon-hein-2} and 
\cite[Section 2]{conlon-hein-3} using different arguments.

For the convenience of the reader, we start by recalling some terminology.

\begin{defn} Let $S$ be a complex variety. Let $E$ be a Weil divisor on $S$.
\begin{itemize}
\item[i)] We say that 
$E$ is $\QQ$-Cartier if there exists an integer $m>0$ such that $mE$ is Cartier, i.e., 
such that the sheaf $\OO_S(mE)$ is a line bundle. 
\item[ii)] We say that $S$ is $\QQ$-factorial if every Weil divisor on $S$ is $\QQ$-Cartier.
\end{itemize}
\end{defn}

\begin{defn}
We say that a normal variety $S$ has rational singularities if  $R^if_*\OO_{S'}=0$ 
for every resolution $f:S'\ra S$ of $S$, and for every $i>0.$
\end{defn}

\begin{lemma}\emph{(}\cite[Proposition 5.15]{km}\emph{)}
Let $S$ be a complex variety with quotient singularities only. Then $S$ is 
$\QQ$-factorial, and its singularities are rational.   
\end{lemma}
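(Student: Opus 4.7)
Both assertions are local on $S$, so I would reduce to the case $S = U/G$ with $U\subset\CC^n$ an open $G$-invariant neighborhood of the origin, $G\subset GL(n,\CC)$ finite, and $\pi\colon U\to S$ the quotient.

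For $\QQ$-factoriality: given a Weil divisor $E$ on $S$, its preimage $\tilde E:=\pi^{-1}(E)$ is a $G$-invariant Weil divisor on the smooth manifold $U$, hence Cartier, locally cut out by $\{h=0\}$ for some holomorphic $h$. For each $g\in G$ the ratio $g^*h/h$ is a unit, so the product $H:=\prod_{g\in G}g^*h$ is manifestly $G$-invariant with $\operatorname{div}(H)=|G|\cdot\tilde E$. Thus $H$ descends to a holomorphic function on $S$ cutting out $|G|\cdot E$, which is therefore Cartier.

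For rational singularities: let $f\colon Y\to S$ be a resolution. The plan is to reduce the vanishing of $R^if_*\OO_Y$ for $i>0$ to the corresponding vanishing over the smooth target $U$. Using Hironaka's equivariant resolution theorem I would construct a smooth $G$-variety $Z$---a $G$-equivariant resolution of the normalization of the main component of $Y\times_S U$---fitting into a commutative square
\begin{equation*}
\begin{array}{ccc}
Z & \xrightarrow{\;p\;} & U\\
q\downarrow & & \downarrow\pi\\
Y & \xrightarrow{\;f\;} & S,
\end{array}
\end{equation*}
in which $p$ is proper, birational, and $G$-equivariant (with $G$ acting trivially on $Y$). Since $U$ is smooth and $p$ is proper birational, $R^ip_*\OO_Z=0$ for $i>0$, and the finiteness of $\pi$ then gives
\begin{equation*}
R^i(fq)_*\OO_Z=R^i(\pi p)_*\OO_Z=\pi_*R^ip_*\OO_Z=0\qquad(i>0).
\end{equation*}

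To descend this vanishing to $R^if_*\OO_Y$, I would exhibit $\OO_Y$ as a direct summand of $Rq_*\OO_Z$ via the $G$-averaging operator $\tfrac{1}{|G|}\sum_{g\in G}g^*$: since $G$ acts trivially on $Y$ and $q$ is generically of degree $|G|$ (realizing the covering $Y\times_S U\to Y$), $G$-averaging splits the natural map $\OO_Y\to Rq_*\OO_Z$, exploiting the exactness of finite-group averaging in characteristic zero. Applying $Rf_*$ then displays each $R^if_*\OO_Y$ as a direct summand of $R^i(fq)_*\OO_Z=0$, so $S$ has rational singularities. The hard part will be to rigorously justify this splitting in the derived category---equivalently, to identify $(R^iq_*\OO_Z)^G$ with $R^if_*\OO_Y$---which is essentially the content of Boutot's theorem that finite-group invariants (in characteristic zero) of a variety with rational singularities again have rational singularities.
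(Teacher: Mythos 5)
The paper offers no proof of this lemma at all: it is quoted directly from Koll\'ar--Mori \cite{km}, Proposition 5.15, so anything you write is already more than the paper does. Your $\QQ$-factoriality argument is correct and is the standard one. The only imprecision is the choice of $\tilde E$: you should take the $G$-invariant cycle $\pi^*E$ (or the reduced preimage) rather than an unspecified ``preimage''; the $G$-invariant function $H=\prod_{g\in G}g^*h$ then descends to $S$ and cuts out a positive integer multiple of $E$ (namely $|G|$ divided by a ramification index), which is all that $\QQ$-Cartierness requires.

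The rational-singularities half has the right architecture --- dominate $f\colon Y\to S$ and $\pi\colon U\to S$ by a common equivariant resolution $Z$, use $Rp_*\OO_Z=\OO_U$ over the smooth $U$, and split $\OO_Y$ off from $Rq_*\OO_Z$ --- but the splitting is precisely the crux, and the $G$-averaging mechanism you propose does not deliver it. The image of the idempotent $\frac{1}{|G|}\sum_{g\in G}g^*$ acting on $Rq_*\OO_Z$ is $R\bar q_*\OO_{Z/G}$, where $\bar q\colon Z/G\to Y$ is the induced proper birational map; identifying this summand with $\OO_Y$, i.e.\ showing $(R^iq_*\OO_Z)^G=0$ for $i>0$, is again the assertion that the quotient singularities of $Z/G$ are rational, so the argument is circular as stated. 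Averaging does split $\OO_Y\hookrightarrow q_*\OO_Z$ at the level of $R^0q_*$, but the Leray edge map $R^if_*(q_*\OO_Z)\to R^i(fq)_*\OO_Z$ need not be injective, so this weaker splitting does not suffice. The standard repairs are either (a) to prove the finite-descent statement (\cite{km}, Prop.\ 5.13, the finite case of Boutot's theorem) via Grothendieck duality: dualizing $\OO_W\to Rh_*\OO_V$ for a generically finite proper surjection $h$ of smooth varieties and invoking Grauert--Riemenschneider vanishing produces a trace $h_*\omega_V\to\omega_W$ and hence the desired retraction; or (b) to bypass the splitting entirely via Kempf's criterion: $U/G$ is Cohen--Macaulay because $\OO_S$ is a direct summand of the maximal Cohen--Macaulay module $\pi_*\OO_U$, and $f_*\omega_Y=\omega_S$ because $G$-invariant holomorphic $n$-forms on $U$ descend to locally $L^2$ forms on $S$ and therefore extend across the exceptional locus of any resolution. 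You correctly flag the missing step and name Boutot, but as written the key point is assumed rather than proved.
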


\begin{defn} Let $S$ be a complex variety.
\begin{itemize}
\item[i)] We say that a Cartier divisor $L$ on $S$  is big if there exist a positive constant
$c=c(L)$ and a  positive integer $m_0$ such that 
$$
h^0(S, L^{\otimes m})\geq cm^{\dim_\CC S},
$$ 
for every $m\geq m_0.$ We say  
that a $\QQ$-Cartier divisor $E$ on $S$ is big if there exists a positive integer 
$k$ such that $kE$ is a big Cartier divisor.
\item[ii)] We say that a Cartier divisor $L$ is pseudo-ample 
if there exists  a positive integer $m_0$ such that $L^{\otimes m}$ is globally 
generated for $m\geq m_0.$ We say  
that a $\QQ$-Cartier divisor $E$ on $S$ is pseudo-ample  if there exists a 
positive integer $k$ such that $kE$ is a pseudo-ample Cartier divisor.
\end{itemize}
\end{defn}

\begin{defn}
A complex variety $S$ is called Moishezon if the transcendence degree of 
its field of meromorphic function is $\dim_\CC S.$
\end{defn}

The following criterion is a standard result in the smooth setting. For 
convenience of the reader, we include a proof in the orbifold setting.

\begin{lemma}
\label{orbi-M-criterion}
An orbifold $S$ is Moishezon if and only if it admits a big $\QQ$-Cartier divisor.
\end{lemma}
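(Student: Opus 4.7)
The plan is to prove the two implications separately; both adapt classical arguments from the smooth Moishezon theory, with the extra care required for $\QQ$-Cartier (rather than Cartier) divisors supplied by the $\QQ$-factoriality of orbifold singularities recorded in the preceding lemma.

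For the sufficient direction, I start with a big $\QQ$-Cartier divisor $E$ and pass to an integer $k \geq 1$ so that $L := \OO_S(kE)$ is an honest line bundle, still satisfying $h^0(S, L^{\otimes m}) \geq c\, m^n$ for $m \gg 0$, where $n := \dim S$. This growth is precisely the statement that the Iitaka dimension of $L$ equals $n$, so for $m$ sufficiently large and divisible the rational map $\phi_m : S \dashrightarrow V_m \subset \PP^{h^0(L^{\otimes m})-1}$ defined by $|L^{\otimes m}|$ has image $V_m$ of full dimension $n$. Hence $\phi_m$ is generically finite onto the projective variety $V_m$, and pullback of meromorphic functions yields a finite algebraic extension $\CC(V_m) \hookrightarrow \mm(S)$; since $\CC(V_m)$ has transcendence degree $n$ over $\CC$, so does $\mm(S)$, whence $S$ is Moishezon.

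For the necessary direction, assume $S$ is Moishezon and choose a resolution of singularities $\rho : S' \ra S$, which exists since $S$ has only quotient singularities; then $S'$ is smooth and still Moishezon. By Moishezon's classical structure theorem in the smooth case, a further composition of blow-ups along smooth centers $\sigma_0 : \wh S \ra S'$ can be arranged so that $\wh S$ is a smooth projective variety, and we set $\sigma := \rho \circ \sigma_0 : \wh S \ra S$. By general position, choose an irreducible very ample divisor $\wh H$ on $\wh S$ no component of which is contained in the exceptional locus of $\sigma$, and let $E := \sigma_* \wh H$, an effective Weil divisor on $S$. By $\QQ$-factoriality of $S$, the divisor $E$ is $\QQ$-Cartier; fix $k \geq 1$ with $kE$ Cartier. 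Because $\wh H$ is the proper transform of $E$ under $\sigma$, the effective pullback decomposes as $\sigma^* E = \wh H + F$ with $F$ an effective, $\sigma$-exceptional $\QQ$-divisor. Then $\sigma^*(mkE) = mk\wh H + mkF \geq mk\wh H$ gives an injection $\OO_{\wh S}(mk\wh H) \hookrightarrow \OO_{\wh S}(\sigma^*(mkE))$, and the projection formula $\sigma_* \OO_{\wh S}(\sigma^*(mkE)) = \OO_S(mkE)$ (valid on the normal $S$ because $\sigma_*\OO_{\wh S} = \OO_S$) yields $h^0(S, mkE) \geq h^0(\wh S, mk \wh H) \geq c(mk)^n$ for $m \gg 0$, the last estimate using ampleness of $\wh H$ on projective $\wh S$. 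Hence $E$ is big, as required.

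The main technical point requiring care is verifying the effectiveness of the exceptional correction term $F = \sigma^* E - \wh H$, which is what enables the inclusion of global sections in the final step; this is secured by the general-position choice of $\wh H$ together with the standard local fact that the pullback of an effective $\QQ$-Cartier divisor on a normal variety remains effective.
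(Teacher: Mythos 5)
Your argument is correct, and it follows the same basic strategy as the paper in one direction but diverges in the other. For ``Moishezon $\Rightarrow$ big'', your proof is structurally the paper's: produce an effective divisor upstairs, push it forward to an irreducible Weil divisor $E$ on $S$, invoke $\QQ$-factoriality of quotient singularities to make $kE$ Cartier, and use the effectivity of the exceptional correction $\sigma^*(kE)-k\wh H$ together with $\sigma_*\OO_{\wh S}=\OO_S$ to transfer the section growth down to $S$. The only difference is the source of the divisor upstairs: the paper stops at a resolution $T$ and quotes Ma--Marinescu for the existence of a big line bundle on a smooth Moishezon manifold, whereas you go one step further to a projective model via Moishezon's structure theorem and take a very ample divisor there; your route uses a heavier black box but makes the growth estimate $h^0(mk\wh H)\geq c(mk)^n$ completely elementary. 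For ``big $\Rightarrow$ Moishezon'', the paper simply pulls the big Cartier divisor back to a resolution (where $h^0$ is unchanged by the projection formula) and again cites Ma--Marinescu, while you argue directly on the singular $S$ via Iitaka dimension: the one step you should flag is the implication that $h^0(L^{\otimes m})\geq cm^n$ forces $\dim\phi_m(S)=n$ for some $m$, i.e.\ the upper bound $h^0(L^{\otimes m})=O(m^{\kappa(L)})$, which is the nontrivial half of Iitaka's theorem and is usually stated for smooth or normal projective varieties; it does hold for compact complex varieties (Ueno), but the cleanest fix is to first pull back to a resolution, exactly as the paper does, and then run your generically-finite-image argument there. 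With that citation or reduction made explicit, both implications are complete.
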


\begin{proof} Let $\pi: T\ra S$ be a resolution of singularities. Since the 
transcendence degree of the function field is a bimeromorphism invariant, we see that $S$ is Moishezon 
if and only if $T$ is Moishezon. Moreover, from the definition, we see that $S$ 
admits a big $\QQ$-Cartier divisor if and only if it admits a big Cartier divisor.

Let $L_S$ be a big Cartier divisor on $S.$ Since $T$ is normal and 
$\pi$ has connected fibers, by the projection formula 
$$
h^0(T, \pi^*L_S^m)=h^0(S, \pi_*\pi^*L_S^m)=h^0(S, L_S^m)\geq cm^{\dim_\CC S},
$$
for some $c>0$ and $m$ sufficiently large. Therefore $\pi^*L_S$ 
is a big Cartier divisor on $T.$ Hence  $T$ is Moishezon 
\cite[Theorem 2.2.15]{ma-ma}, and so $S$ is Moishezon as well.

Conversely, since $S$ is Moishezon then $T$ is Moishezon as well. As above, by 
\cite[Theorem 2.2.15]{ma-ma}, there exists a big line bundle $L_T$ on $T$. 
Replacing $L_T$ by a large tensor power, we may assume $h^0(T,L_T)>0,$ 
and so there exists an effective divisor $H$ such that $L_T=\OO_T(H).$ Let $F=\pi(H),$ 
and discard all components of codimension strictly larger than one. Since $L_T$ is big, 
the divisor $H$ is not $\pi$-exceptional and so $F$ is a non-trivial Weil divisor on $S.$  
Moreover, since $S$ has only quotient singularities, there exists $k>0$ such that $kF$ 
is Cartier \cite[Proposition 5.15]{km}. Since $\pi$ is birational, for every positive 
integer $m$ there exists an effective $\pi$-exceptional divisor $E_m$ such that 
$\pi^*\OO_S(mkF)=\OO_T(mkH+E_m).$ This yields an injection of sheaves 
$\mathcal{O}_T(mkH) \hookrightarrow \pi^*\OO_S(mkF)$ by tensoring with a defining section of $\mathcal{O}_T(E_m)$.
By passing to global sections,
$$
h^0(S,\OO_S(mkF))=h^0(T,\pi^*\OO_S(mkF))\geq h^0(T, L_T^{mk})\geq c'(mk)^{\dim_\CC S},
$$
for some $c'>0$ and all $m\gg 0.$ Hence
$F$ is a big $\QQ$-Cartier divisor
on $S.$
\end{proof}

\begin{lemma} 
For every $m\geq 0,$ the following sequence is exact:
\begin{equation}
\label{b-ses}
0\ra\OO_X((m-1)D)\ra \OO_X(mD)\ra \OO_D(mD)\ra 0.
\end{equation}
\end{lemma}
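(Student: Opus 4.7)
The plan is to verify exactness locally by passing to orbifold uniformizations, which reduces the claim to the classical Cartier case. Away from $\Sing(X)\subset D$ the divisor $D$ is Cartier, $\OO_X(D)$ is an invertible sheaf, and the stated sequence is obtained by tensoring the ideal sheaf sequence $0\to\OO_X(-D)\to\OO_X\to\OO_D\to 0$ with $\OO_X(mD)$.

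At a point $p\in D\cap\Sing(X)$, choose a local orbifold uniformization $\pi\colon\widetilde U\to \widetilde U/H\cong U\subset X$. By the explicit construction of $\check{\cal U}$ as an iterated quotient of the smooth manifold $\cal U$ recalled above, in any such chart the preimage $\widetilde D:=\pi^{-1}(D)$ is a smooth $H$-invariant Cartier divisor on $\widetilde U$. The analogous sequence
\[
0\to \OO_{\widetilde U}((m-1)\widetilde D)\to \OO_{\widetilde U}(m\widetilde D)\to \OO_{\widetilde D}(m\widetilde D)\to 0
\]
therefore holds upstairs by the smooth theory. Pushing forward by $\pi$ and extracting $H$-invariants, one recovers $\OO_X((m-1)D)|_U$, $\OO_X(mD)|_U$ and $\OO_D(mD)|_{U\cap D}$ in turn, by the orbifold definition of these divisorial sheaves. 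Since $H$ is finite and we are in characteristic zero, the Reynolds averaging operator splits every coherent $H$-module into its isotypical summands, making the $H$-invariants functor exact, so the downstairs sequence on $U$ is exact as well.

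The main point to be careful about, and essentially the only non-formal step, is the identification of the $H$-invariant pushforwards with the orbi-divisorial sheaves $\OO_X(kD)$ and $\OO_D(kD)$. This uses precisely that $D$ is a suborbifold divisor of $X$, meaning its local lifts in uniformizing charts are smooth Cartier divisors, a property built into the construction of the compactification. Once this identification is secured, the local exact sequences glue together, by functoriality of the $H$-invariants construction and the compatibility of uniformizing charts on an orbifold, to yield the stated global sequence on $X$.
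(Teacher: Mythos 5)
Your proposal is correct and follows essentially the same route as the paper: reduce to the standard exact sequence upstairs in a smooth uniformizing chart, push forward along the finite quotient map (which kills $R^1\pi_*$), take invariants (exact in characteristic zero), and identify the invariant pushforwards with $\OO_X(kD)$ and $\OO_D(kD)$. The only cosmetic difference is that the paper uses the single global chart $\cal U\to\cal U/G$ covering a neighborhood of $D$ (exactness away from $D$ being immediate since $X$ is smooth there), rather than gluing local uniformizations, and it cites Mumford for the identification of the invariant pushforwards that you rightly flag as the one non-formal step.
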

\begin{proof}

In general, we have the exact sequence of the Weil divisor $D$: 
\begin{equation}
\label{ses-m=0}
0\ra\OO_X(-D)\ra \OO_X\ra \OO_D\ra 0,
\end{equation}
which yields (\ref{b-ses}) in the case $m=0.$ If $D$ is Cartier, then $\OO_X(mD)$ is locally free
(for example, if $X$ is smooth, i.e., when $G=G_0$). Since tensoring  by a locally free sheaf preserves 
exactness, we would then immediately see from (\ref{ses-m=0})  that (\ref{b-ses}) is true. However, our
$D$ is merely $\QQ$-Cartier, and so a different argument is required.

It is enough to show that (\ref{b-ses}) holds locally near $D.$ 
Let $\pi: \cal U\ra \widehat{\cal U}$ be the orbifold chart around $D,$ and let $\Sigma\subset \cal U$ be the 
hyperplane at infinity. Since $\cal U$  is smooth, for every $m\geq 0$ we have an exact sequence:
$$
0\ra \OO_{\cal U}((m-1)\Sigma)\ra\OO_{\cal U}(m\Sigma)\ra\OO_\Sigma(m\Sigma)\ra 0.
$$
Pushing forward onto $\widehat{\cal U}$ we get 
\begin{equation*}
0\ra \pi_*\OO_{\cal U}((m-1)\Sigma)\ra\pi_*\OO_{\cal U}(m\Sigma)\ra
\pi_*\OO_\Sigma(m\Sigma)\ra {R}^1 \pi_*\OO_{\cal U}((m-1)\Sigma).
\end{equation*}
Moreover, since $\pi$ is finite, ${R}^1 \pi_*\OO_{\cal U}((m-1)\Sigma)=0.$ Restriction to 
$G$-invariant subsheaves induces the exact sequence
\begin{equation}
\label{G-inv-ses}
0\ra \left(\pi_*\OO_{\cal U}((m-1)\Sigma)\right)^G\ra\left(\pi_*\OO_{\cal U}(m\Sigma))\right)^G
\ra\left(\pi_*\OO_\Sigma(m\Sigma)\right)^G\ra0.
\end{equation}
Since $\Sigma$ is $G$-invariant, and $D=\pi(\Sigma),$ we have
$(\pi_*\OO_{\cal U})^G\simeq \OO_{\widehat{\cal U}}$ and 
$(\pi_*\OO_\Sigma)^G\simeq \OO_D.$ Moreover,  by  
\cite[Chapter II, Section 7, Proposition 2]{mumford}, 
we have $(\pi_*\OO_{\cal U}(k\Sigma))^G$ $\simeq$ $\OO_{\widehat{\cal U}}(kD)$  
and $(\pi_*\OO_\Sigma(k\Sigma))^G\simeq  \OO_{D}(kD)$ for every $k\geq 0.$ 
Hence the exactness of the sequence (\ref{G-inv-ses}) implies 
the exactness of the sequence (\ref{b-ses}).
\end{proof}

\begin{lemma}
\label{XMoishezon}
Let $(X, D)$ be the compactification constructed in Theorem \ref{summary}. 
Then $X$ is Moishezon.
\end{lemma}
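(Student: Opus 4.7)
The plan is to apply Lemma \ref{orbi-M-criterion}, so the task is to exhibit a big $\QQ$-Cartier divisor on $X$. The natural candidate is the boundary divisor $D$ itself: it is $\QQ$-Cartier by \cite[Proposition 5.15]{km} because $X$ has only quotient singularities, so the task reduces to establishing the asymptotic lower bound
$$
h^0(X, \OO_X(mD)) \;\geq\; c\, m^n \qquad (m \gg 0,\ c > 0).
$$

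Iterating the short exact sequence (\ref{b-ses}) and passing to long exact sequences in cohomology yields the telescoping identity
$$
h^0(X, \OO_X(mD)) \;=\; \sum_{k=0}^{m} \dim T_k,
$$
where $T_k \subseteq H^0(D, \OO_D(kD))$ is the image of the restriction from $H^0(X, \OO_X(kD))$. The graded vector space $T = \bigoplus_k T_k$ is a graded $\CC$-subalgebra of $R(D, \OO_D(D))$, which by Theorem \ref{summary}(ii) is isomorphic to $\CC[z_1, \ldots, z_n]^G$, a ring of Krull dimension $n$. If $T$ itself has transcendence degree $n$ over $\CC$, then it contains a graded polynomial subring on $n$ algebraically independent homogeneous generators, and an elementary count of monomials of total degree $\leq m$ in such a polynomial ring gives $\sum_{k \leq m} \dim T_k \geq c\, m^n$, which is exactly what we need.

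The crux of the proof is therefore to produce $n$ algebraically independent elements inside $T$. Fix algebraically independent homogeneous invariants $p_1, \ldots, p_n \in \CC[z_1, \ldots, z_n]^G$ of degrees $d_1, \ldots, d_n$. Via the ALE diffeomorphism, each $p_i$ pulls back to a smooth function $\tilde p_i$ on $M_\infty$ that is holomorphic for the Euclidean complex structure; because the ALE complex structure differs from the Euclidean one by a $C^{2,\alpha}_{-\tau}$ error with $\tau > 0$, a smooth cutoff extension of $\tilde p_i$ to $M$ satisfies $\bar\partial \tilde p_i = O(|x|^{d_i - 1 - \tau})$ at infinity. Applying Hörmander's $L^2$-theory for $\bar\partial$ on the complete Kähler ALE manifold $(M, g)$ in appropriate polynomial-weighted Sobolev spaces, one obtains a correction $u_i$ with $\bar\partial u_i = \bar\partial \tilde p_i$ and $|u_i| = o(|x|^{d_i})$. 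Then $f_i := \tilde p_i - u_i$ is a global holomorphic function on $M$ asymptotic to $p_i$ at infinity, and hence extends to a section of $\OO_X(d_i D)$ whose restriction to $D$ is $p_i$. In particular, $p_1, \ldots, p_n \in T$; since they are algebraically independent in the ambient ring $R(D, \OO_D(D))$ they are algebraically independent in $T$, giving the required transcendence basis.

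The principal obstacle is the weighted $\bar\partial$-step: the correction $u_i$ must decay strictly faster than $|x|^{d_i}$, within a polynomial weight range that avoids the discrete set of indicial roots of $\bar\partial$ at the ALE end. Since the decay rate $\tau$ is only assumed positive (rather than large, as in \cite{hein-lebrun}), the admissible weight window may be narrow; this is handled by replacing each $p_i$ with a high power $p_i^N$, which preserves algebraic independence and lifts $d_i$ safely above any finite set of indicial weights. With this adjustment, the requisite weighted Fredholm theory on asymptotically conical Kähler manifolds is standard and is already implicit in the compactification constructions recalled earlier in this section.
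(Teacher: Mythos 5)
Your overall architecture is sound and genuinely different from the paper's: you reduce to exhibiting $D$ as a big $\QQ$-Cartier divisor, telescope \eqref{b-ses} to get $h^0(X,\OO_X(mD))=\sum_{k\le m}\dim T_k$, and try to bound this below by producing $n$ algebraically independent homogeneous elements in the image subalgebra $T\subseteq R(D,\OO_D(D))$. (The paper instead observes that the same exact sequences force $m\mapsto h^i(X,\OO_X(mD))$ to be bounded for all $i\ge 1$, and then gets $h^0(X,\OO_X(mD))\sim cm^n$ from orbifold Riemann--Roch together with $D^n=(D|_D)^{n-1}>0$.) The problem is that your crux step is exactly the hard part, and the analytic argument you sketch for it has a genuine gap. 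Solving $\bar\partial u_i=\bar\partial\tilde p_i$ on all of $M$ with $u_i=o(|x|^{d_i})$ is not an off-the-shelf application of H\"ormander: $M$ is a complete K\"ahler manifold with no curvature sign which, being a candidate resolution, may contain compact analytic subsets, so there is no globally defined strictly plurisubharmonic weight dominating the negative part of the Ricci curvature, and the basic $L^2$-estimate for $(0,1)$-forms with values in $\OO_M$ is simply unavailable. The weighted-Fredholm alternative is also not ``implicit in the compactification constructions'': \cite{hein-lebrun}, \cite{chili} and \cite{hhn} produce holomorphic data on the end $M_\infty$ only, and extending across the compact core is precisely the global obstruction (a cokernel term for $\bar\partial$ between weighted spaces, i.e.\ an $H^{0,1}$-type space) that your sketch does not control. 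Your ``replace $p_i$ by $p_i^N$'' remark addresses the wrong difficulty: the indicial roots are discrete, so a good non-indicial weight in $(d_i-\tau,\,d_i)$ always exists; what is missing is surjectivity of $\bar\partial$ onto the decaying $\bar\partial$-closed $(0,1)$-forms at that weight. (A smaller issue: decay of $J-J_0$ in the asymptotic chart is not part of Definition~\ref{ALE} and itself requires the analysis of the papers just cited.)

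If you want to keep your section-counting framework without invoking Riemann--Roch, the gap can be filled purely cohomologically from facts already at hand. Since $H^1(D,\OO_D(mD))=0$ for all $m\ge 0$, the long exact sequence of \eqref{b-ses} shows that $H^1(X,\OO_X((m-1)D))\to H^1(X,\OO_X(mD))$ is surjective, so $h^1(X,\OO_X(mD))$ is non-increasing in $m$ and hence eventually constant; once it is constant this surjection is an isomorphism, the connecting map $H^0(D,\OO_D(mD))\to H^1(X,\OO_X((m-1)D))$ vanishes, and the restriction $H^0(X,\OO_X(mD))\to H^0(D,\OO_D(mD))$ is surjective for all $m\ge m_0$. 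Then $\dim T_m=\dim\bigl(\CC[z_1,\dots,z_n]^G\bigr)_m$ for $m\ge m_0$, and summing gives $h^0(X,\OO_X(mD))\ge c\,m^n$ because $\CC[z_1,\dots,z_n]^G$ has Krull dimension $n$ --- no $\bar\partial$-analysis and no Riemann--Roch needed. As written, however, your proof of the key growth estimate is incomplete.
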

\begin{proof} Since $\Sigma \simeq \PP^{n-1},$ we have $H^i(\Sigma, \OO_\Sigma(m\Sigma))=
H^i(\PP^{n-1}, \OO_{\PP^{n-1}}(m))=0$ 
for any $i>0$ and $m\geq 0.$ As the restriction $\pi|_{\Sigma}:\Sigma\ra D$ is finite, by Leray's spectral sequence 
we also have $H^i(D,\OO_D{(mD)})=0$ for every $i>0$ and $m\geq 0.$ 

Passing to cohomology, from the short exact sequence of sheaves
$$
0\ra\OO_X{((m-1)D)}\ra\OO_X(mD)\ra  \OO_D{(mD)}\ra 0
$$
we see that $h^i(X,\OO_X{(mD)})=h^i(X,\OO_X((m-1)D))$ for $m>0$ and $i>1,$ 
while  $h^1(X,\OO_X{(mD)})\leq h^1(X,\OO_X{((m-1)D)})$ for $m>0.$
Therefore, the functions 
$$
m\mapsto h^i(X,\OO_X{(mD)}),~i\geq1,
$$ are bounded. Since 
$D^n>0,$ it follows from Riemann-Roch that 
$$
h^0(X,\OO_X(mD))\sim cm^{n},
$$ 
for some positive constant $c$ and for all $m \gg 0$. That means the $\QQ$-Cartier divisor $\OO_X(D)$ is big, 
and so, by Lemma \ref{orbi-M-criterion}, $X$ is Moishezon.
\end{proof}

\begin{prop}
\label{prop-vanishing}
Let $(X, D)$ be the compactification of an ALE K\"ahler manifold 
constructed in Theorem \ref{summary}. We have:
\begin{itemize}
\item[i)] $H^i(X,\OO_X)=0,~i>0.$
\item[ii)]  $H^i(X,\OO_X(mD))=0,\,m> 0,\,i>0.$
\item[iii)] The $\QQ$-Cartier divisor $\OO_X(D)$ is pseudo-ample. 
\item[iv)] $X$ is projective.
\end{itemize} 
\end{prop}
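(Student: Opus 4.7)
The plan is to establish the four parts in the order $\mathrm{(i)} \Rightarrow \mathrm{(ii)} \Rightarrow \mathrm{(iii)} \Rightarrow \mathrm{(iv)}$, with $\mathrm{(i)}$ being the main technical input.

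For $\mathrm{(i)}$, I will exploit the bimeromorphic invariance of $h^{p,0}$ combined with the rationality of $X$. The ALE asymptotic exhibits an analytic open set $M_{\infty} \subset M \subset X$ biholomorphic to $(\CC^n \setminus B_R)/G$, which is an open subset of $\PP^n/G$, so $X$ is bimeromorphic to the projective variety $\PP^n/G$. Choose a resolution $\pi \colon Y \to X$. Since $X$ has only quotient (hence rational) singularities, $R^i\pi_{\ast} \OO_Y = 0$ for $i > 0$, and by Leray $H^i(X, \OO_X) \cong H^i(Y, \OO_Y)$. The smooth manifold $Y$ is Moishezon (inherited from $X$) and bimeromorphic to any smooth resolution $Z$ of $\PP^n/G$, and the latter satisfies $h^{p,0}(Z) = 0$ for $p > 0$ because $Z$ is rational and projective. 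Since smooth Moishezon manifolds lie in Fujiki's class $\cal C$ where Hodge symmetry holds, and since $h^{p,0}$ is a bimeromorphic invariant of smooth compact complex manifolds, I conclude $h^{0,p}(Y) = h^{p,0}(Y) = 0$ for $p > 0$.

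For $\mathrm{(ii)}$, I induct on $m \geq 0$ using the long exact sequence of (\ref{b-ses}) and the vanishing $H^i(D, \OO_D(mD)) = 0$ for $i > 0, m \geq 0$ that was already noted at the beginning of Lemma \ref{XMoishezon} via the finiteness of $\Sigma \to D$. The base $m = 0$ is $\mathrm{(i)}$, and the inductive step is immediate from the exact sequence. For $\mathrm{(iii)}$, parts $\mathrm{(i)}$ and $\mathrm{(ii)}$ give $H^1(X, \OO_X((m-1)D)) = 0$ for every $m \geq 1$, so the restriction map $H^0(X, \OO_X(mD)) \to H^0(D, \OO_D(mD))$ is surjective. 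Ampleness of $\OO_D(D)$ (Theorem \ref{summary}) implies that $\OO_D(mD)$ is globally generated for $m$ large, and lifting such generators through the surjection produces sections of $\OO_X(mD)$ that are nonvanishing at every point of $D$. On the complement $M = X \setminus D$, the constant section $1 \in H^0(X, \OO_X) \subset H^0(X, \OO_X(mD))$ never vanishes, so $\OO_X(mD)$ is globally generated on all of $X$ for large $m$.

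For $\mathrm{(iv)}$, I will combine $\mathrm{(iii)}$ with the bigness of $\OO_X(D)$ from Lemma \ref{XMoishezon}: for $m$ large, the linear system $|mD|$ defines a birational morphism $X \to \PP^N$ onto a projective variety, and normality of $X$ then yields projectivity. Alternatively, $X$ is both K\"ahler (Theorem \ref{summary}) and Moishezon (Lemma \ref{XMoishezon}), so projectivity is a consequence of Moishezon's theorem for K\"ahler orbifolds. The main obstacle will be $\mathrm{(i)}$: the bimeromorphic invariance of $h^{p,0}$ and Hodge symmetry for manifolds in class $\cal C$ are classical but nontrivial inputs due to Fujiki, and they carry the burden of the argument.
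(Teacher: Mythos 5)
Your parts (ii) and (iii) follow the paper's argument, and your second route to (iv) (K\"ahler $+$ Moishezon $+$ Namikawa-type projectivity criterion) is the one the paper uses for $n\geq 3$. But part (i), which you correctly identify as carrying the whole burden, has a genuine gap. You assert that $M_{\infty}$ is \emph{biholomorphic} to $(\CC^n\setminus B_R(0))/G$: the ALE condition only provides a \emph{diffeomorphism} in which the metric decays to the Euclidean one, not a biholomorphism, and the complex structure near infinity is in general \emph{not} standard. Indeed, for $n=2$ the paper explicitly notes that only the first infinitesimal neighborhood of $\Sigma\subset\XX$ is standard, and the whole point of Theorem~\ref{DiffTypeALE} is that $M$ resolves a possibly nontrivial \emph{deformation} of $\CC^2/G$; the standardness of the end for $n\geq 3$ is a consequence (Corollary~\ref{ALEsfK}) obtained only after Schlessinger rigidity, so it cannot be an input here. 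Moreover, even granting a biholomorphism of ends, the inference ``$X$ and $\PP^n/G$ contain biholomorphic analytic open subsets, hence are bimeromorphic'' is invalid: $M_\infty$ is a metrically open set whose complement is not an analytic subvariety, and sharing such an open set imposes no bimeromorphic relation (any two compact complex $n$-folds contain biholomorphic balls). So the rationality of $X$, on which your entire proof of (i) rests, is not established.

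The paper's proof of (i) supplies the missing idea. Since $X$ is Moishezon (Lemma~\ref{XMoishezon}), orbifold Hodge symmetry reduces (i) to $H^0(X,\Omega^i_X)=0$ for $i\geq 1$, and this vanishing is proved \emph{directly}: lines $\ell\subset\Sigma$ have normal bundle $\OO_{\CC\PP^1}(1)^{\oplus(n-1)}$ in $\cal U$, so their deformations sweep out a neighborhood of $\Sigma$, and the restriction of any holomorphic $i$-form to such a curve is a section of $\bigwedge^i\bigl(\OO_{\CC\PP^1}(-2)\oplus\OO_{\CC\PP^1}(-1)^{\oplus(n-1)}\bigr)$, which has no nonzero sections; analytic continuation from $\cal U$ then kills the form on all of $X$. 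Separately, your first route to (iv) --- ``$|mD|$ defines a birational morphism onto a projective variety, and normality of $X$ yields projectivity'' --- is also incorrect as stated: a normal variety admitting a birational morphism onto a projective variety need not be projective (Hironaka's non-projective Moishezon examples map birationally onto $\PP^3$); your alternative via the K\"ahler property and Namikawa's criterion is the correct argument.
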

\begin{proof}
i)  The complex variety $X$ is a K\"ahler orbifold; cf.~Theorem \ref{summary}. Therefore, the orbifold de Rham 
cohomology of $X$ satisfies Hodge decomposition as well as
Hodge symmetry, $H^{p,q}(X)\simeq \overline{H^{q,p}(X)}.$ To prove the claim it is 
therefore sufficient to show that $H^0(X,\Omega_X^i)=0$ for 
every $i\geq 1,$ where $\Omega_X^i$ is the sheaf of holomorphic 
orbifold $i$-forms. We will do this using an argument similar to \cite[page 555]{lebrun-maskit}.

Let $s\in H^0(X,\Omega_X^i).$ The restriction of $s$ to $\widehat{\cal U}$ 
induces a holomorphic section $\pi^*s\in H^i(\cal U,\Omega^i_{\cal U}),$ 
where $\pi: \cal U\ra \widehat{\cal U}$ is the 
orbifold chart near $D.$   Recall now that  the hyperplane at infinity 
$\Sigma \simeq \CC\PP^{n-1}\subset \cal U$ has 
normal bundle $\OO_{\CC\PP^{n-1}}(1).$ This implies that any line 
$\ell\in \Sigma $ has normal bundle 
$N_{{\ell}|\cal U}\simeq \OO_{\CC\PP^1}(1)^{\oplus{(n-1)}}.$ 
Therefore, standard deformation theory shows that by deforming $\ell,$ 
one can sweep out a neighborhood of $\Sigma $ in $\cal U.$ This means that, by 
eventually shrinking $\cal U,$ for every point $p\in \cal U$ there exists a 
curve $C\simeq \CC\PP^1$ with normal bundle 
$\OO_{\CC{\PP^1}}(1)^{\oplus{(n-1)}}$ passing through $p.$ 
Since the restriction of any holomorphic $i$-form on $\cal U$ 
to a smooth rational curve with normal bundle 
$\OO_{\CC\PP^1}(1)^{\oplus{(n-1)}}$ is a section of the exterior product 
$$
\bigwedge\nolimits ^{i}\left(\OO_{\CC\PP^1}(-2)\oplus 
\OO_{\CC\PP^1}(-1)^{\oplus{(n-1)}}\right),
$$
such a restriction must vanish identically. As such curves sweep out the 
entire $\cal U,$ it follows that $\pi^*s$ vanishes identically on $\cal U.$ 
Hence, $s=0$ on $\widehat{\cal U}$, which implies 
$s=0$ on $X$ by analytic continuation.

ii) As in the proof of Lemma \ref{XMoishezon} we can infer 
that 
$$
h^i(X,\OO_X(mD))=h^i(X,\OO_X((m-1)D))~{\text{for every}}~m>0,\, i>1,
$$
and
$$
h^1(X,\OO_X(mD))\leq h^1(X,\OO_X((m-1)D))~{\text{for every}}~m>0.
$$ 
By induction, and using the fact that 
$H^i(X,\OO_X)=0$ for $i>0$ from item i) above, it now follows that 
$H^i(X,\OO_X(mD))=0$ for all $i>0$ and $m > 0$ as well.

iii) Let $k$ be a positive integer such that $\OO_X(kD)$ is Cartier. 
Since in this case $\OO_D(kD)$ is also Cartier and  ample, 
$\OO_D(mkD)$ is globally generated for $m$ large enough. Thus, for every point
$x \in \text{supp}(D)$, there exists a global section $s \in H^0(D, \mathcal{O}_D(mkD))$ 
such that $s(x) \neq 0$. From item ii) above, we 
know that there is an exact sequence of vector spaces 
\begin{equation*}
H^0(X,\OO_X(mkD))
\ra H^0(D,\OO_D(mkD))\ra 0. 
\end{equation*}
Thus, $s$ extends to a global section of $\mathcal{O}_X(mkD)$. 
Using also a defining 
section of $mkD$ to cover the case that $x \not\in \text{supp}(D)$, we see that for all $x \in X$ there exists a section 
$s \in H^0(X,\OO_X(mkD))$ such that $s(x) \neq 0$.  Thus, $\OO_X(mkD)$ 
is  globally generated for $m\gg 0.$ 

iv) If $n=2,$ this follows from \cite[Chapter IV, Theorem 6.2]{bhpv}. If $n\geq 3,$ we proved in Lemma 
\ref{XMoishezon} that $X$ is Moishezon, while by 
Theorem \ref{summary} we also know that it is K\"ahler. 
Since $X$ has at most quotient singularities, Namikawa's 
projectivity criterion \cite[Corollary 1.7]{namikawa} implies 
that $X$ is in fact projective.
\end{proof}

We next study the structure of $X$ via the linear system $|mkD|$ for 
$m\gg 0,$ where $k$ is a fixed positive integer such that $kD$ is Cartier, e.g., 
$k=|G|.$ In the proof of Proposition \ref{prop-vanishing} we noticed 
that there exists a sufficiently large integer $m$ such that
$\OO_D(mkD)$ is very ample and $\OO_X(mkD)$ is globally generated. 
For such $m$, let $\phi: X\ra \PP^N$ be the morphism defined by the 
complete linear system $|mkD|.$ Since $D$ is big as observed 
in the proof of Lemma \ref{XMoishezon}, by increasing $m$ if necessary,
we can assume that the map $\phi$ is birational onto its image \cite[Lemma 2.60]{km}. 
To simplify the notation, let $X'=\phi(X)$ and $D'=\phi(D).$

\begin{prop}
\label{prop-image}
After increasing $m$ further if necessary, the complex variety $X'$ is normal, the map $\phi$ is an isomorphism onto its image in a
neighborhood of $D$, and the $\QQ$-Cartier Weil divisor $D'$ is ample on $X'$. In particular, the $\phi$-exceptional set in $X$ does not intersect $D.$
\end{prop}

\begin{proof} That $D'$ is ample on $X'$ is clear by construction. The normality of $X'$ follows directly from 
\cite[Proposition 1.16]{kk} after increasing $m$ if necessary. To see that $\phi$ is an isomorphism onto its 
image in a neighborhood of $D$, we can now follow \cite[Chapter III, Proof of Theorem 4.2]{har-ample}. In fact, since we already know that $X'$ is normal, we have $\phi_*\OO_X = \OO_{X'}$, and so $f = \phi$ and $g = \text{Id}_{X'}$ in the Stein factorization $\phi = g \circ f$ considered in \cite{har-ample}.
\end{proof}

\section{Sweeping out the cone with hyperplane sections}

The compactification introduced in the previous section carries sufficient information 
to identify the complex structure of an ALE K\"ahler manifold. This information 
is encoded in suitable rings of sections. The ``sweeping out the cone with hyperplane sections" 
technique of Pinkham \cite{pink} exhibits the relation between these 
rings of sections geometrically. Our presentation of this technique closely follows \cite[Section 3.1]{kk}.
The proof of Theorem \ref{DiffTypeALE} is an application of this construction. 

\subsection{Rings of sections}
\label{rings-sections}
The structure of $X'$ is encoded in the $\proj$  construction \cite[Chapter II, Section 2]{hartshorne} 
applied to  the ring of sections of $D'.$ We are using the following definition of a section ring:

\begin{defn}
Let $Y$ be a projective variety. Let $E$ be a $\mathbb{Q}$-Cartier Weil divisor on $Y.$ 
The ring of sections of $E$ in $Y$ is 
$$
\displaystyle R(Y,E):=\bigoplus_{m\geq 0} H^0(Y,\OO_Y(mE)).
$$
\end{defn}
Multiplication of sections makes $R(Y,E)$ a graded ring 
where the homogeneous elements of degree $i$  are the sections of $H^0(Y,\OO_Y(iE)).$ 
The general results on graded rings that we need here are recalled in Appendix \ref{it} for convenience.

The graded rings $R(X', D')$ and $R(D',D'|_{D'})$ are finitely generated  $\CC$-algebras by Proposition
 \ref{ample-case} and Theorem \ref{summary} iii), respectively. Thus, their
 $\proj$-constructions \cite[Chapter II, Section 2]{hartshorne} are complex projective varieties.

To understand the relations between the projective varieties 
$\proj R(X',D')$ and $\proj R(D',D'|_{D'}),$ let  $0\neq S\in H^0(X',\OO_{X'}(k_0D'))$ 
be a homogeneous element 
of $R(X',D')$ vanishing exactly along $D',$ where $k_0$ is the least positive 
integer such that $H^0(X',\OO_{X'}(jk_0 D'))\neq 0$ for all $j \geq 1.$ 
By replacing the graded ring $R(X',D')$ with its $k_0$-Veronese graded subring $R(X',D')^{(k_0)}$ 
(see Definition \ref{def-veronese}),
using Proposition \ref{veronese-proj} and Remark \ref{deg1}, we will assume without 
losing generality that $S$ is a homogeneous element of degree $1.$

\begin{prop}
\label{key-properties}
The ring of sections $R(X',D')$ has the following properties:
\begin{itemize}
\item[i)] 
The ring of sections $R(X',D')$ is finitely generated and  
\begin{equation}
\label{fingen}
X'\simeq \proj R(X', D').
\end{equation}

\item[ii)] There exists an isomorphism of $\CC$-algebras 
\label{key-obs}
\begin{equation}
R(X',D')\simeq R(D',D'|_{D'})[S].
\end{equation}

\item[iii)] There exists an embedding $\iota: \proj R(D',D'|_{D'})\ra \proj R(X',D')$ 
such that the diagram
\begin{equation}
\label{proj-compatibility}
\begin{tikzcd}
&D' \arrow[hookrightarrow]{d}{j} \arrow{r}{\simeq} &\proj R(D',D'|_{D'})\arrow[hookrightarrow]{d}{\iota} \\
&X' \arrow{r}{\simeq} &\proj R(X',D')
\end{tikzcd}
\end{equation}
commutes, where $j: D'\ra X'$ denotes the inclusion map.
\end{itemize}
\end{prop}

\begin{proof}
i) By Proposition \ref{prop-image}, the $\mathbb{Q}$-Cartier Weil divisor $D'$ is ample on the normal variety $X'$.
As we already said above, item i) now follows from Proposition \ref{ample-case}.

ii) Notice that
by Proposition \ref{prop-vanishing} ii) and the projection formula, for every $m\geq 1$ 
we have an exact sequence of $\CC$-vector spaces
\begin{equation}
\label{surjection}
0 \to H^0(X',\mathcal{O}_{X'}((m-1)D')) \stackrel{\otimes S}{\ra} H^0(X',\OO_{X'}(mD'))\ra H^0(D',\OO_{D'}(mD'))\ra 0.
\end{equation}
In particular, this shows that the map of graded $\CC$-vector spaces  
$$
r: R(X',D')\ra R(D',D'|_{D'})
$$ 
given by restriction of sections from $X'$ to $D'$
is surjective. The map $\phi:X\ra X'$ induces an isomorphism
$R(D', D'|_{D'})\simeq R(D,D|_D)$ of rings of sections. Let now $s_1,\dots, s_p$ be algebra generators of  
$R(D', D'|_{D'})$ induced from $R(D,D|_D).$ We can assume that these generators are 
homogeneous, of positive degrees $d_1, \dots , d_p > 0,$ respectively. There exist
homogeneous elements $S_1, \dots ,S_p\in R(X',D')$ such that  
$r(S_i)=s_i,\, i=1,\dots, p.$ 

Since $H^0(X',\OO_{X'}(mD'))$ are finite-dimensional vector 
spaces, we can use \eqref{surjection} to prove by induction on $m$ that  $\{S,S_1,\dots, S_p\}$ are algebra 
generators of $R(X',D').$ Hence $R(X',D')\simeq R(D',D'|_{D'})[S]$ as  $\CC$-algebras, as $S$ 
is a generator in degree $1.$

iii) Since $\OO_{X'}(D')$ and 
$\OO_{D'}(D')$ are ample $\QQ$-Cartier Weil divisors on $X'$ and $D'$ 
respectively, by Proposition \ref{ample-case} there exist two natural isomorphisms 
$f:X'\ra \proj R(X',D')$ and $g: D'\ra \proj R(D',D'|_{D'}).$ 
Let $d := p \cdot \text{lcm}(d_1,\dots, d_p).$ Then it follows from the proof of Proposition 
\ref{deg1-gen} that the graded $\CC$-algebra $R(D',D'|_{D'})^{(d)}$ is generated 
by sections in $H^0(D',\OO_{D'}(dD')).$ Since $\deg S=1,$ 
this implies that 
the restriction map yields a surjection of graded $\CC$-algebras 
$$
R(X',D')^{(d)}\ra R(D',D'|_{D'})^{(d)}.
$$
By \cite[Proposition ({\bf 2.9.2})]{ega2}, such a surjection is equivalent to the 
existence of an embedding 
$$
 \proj R(D',D'|_{D'})^{(d)}\hookrightarrow\proj R(X',D')^{(d)}.
$$
Furthermore, by Proposition  \ref{veronese-proj}, this is equivalent 
to an embedding 
$$
\iota: \proj R(D',D'|_{D'})\hookrightarrow \proj R(X',D').
$$ 
By construction, the embedding $\iota$ is compatible with the embedding 
of $j: D'\hookrightarrow X'$ via the maps $f$ and $g$ defined above. 
\end{proof}

\begin{rmk} 
\label{no-prime}
Proposition \ref{prop-vanishing} ii) and the projection formula yield natural 
isomorphisms of graded $\CC$-algebras 
$$
 R(X',D')\simeq  R(X,D)\quad{\text{and}}\quad  R(D',D'|_{D'})\simeq R(D,D|_{D}).
$$
\end{rmk}
 
\subsection{Specialization to the quotient singularity}
\label{normal-cone}

In this section we identify the structure of the affine variety 
$X'\setminus D'$ as a deformation of the quotient singularity 
$\CC^n/G,$ thus proving Theorem \ref{DiffTypeALE}. 
The specialization to the quotient singularity is accomplished 
by appealing to the classical ``sweeping out the cone with 
hyperplane sections" construction of Pinkham 
\cite[Section 7]{pink}, and amounts to lifting the diagram 
(\ref{proj-compatibility}) to a one-parameter deformation.
This strategy was previously
used in \cite[Section 3]{conlon-hein-3} although, in the setting of \cite{conlon-hein-3}, 
the required surjectivity of the restriction maps came from the fact that $D'$ was an
anti-canonical divisor in $X'$.

\begin{defn} Let $Y$ be a projective variety equipped with an ample $\mathbb{Q}$-Cartier Weil divisor $E$.
\begin{itemize}
\item[i)] The affine cone over $Y$ with co-normal sheaf $\mathcal{O}_Y(E)$ is  
$$
C_a(Y,E):=\spec R(Y,E).
$$
\item[ii)]  The projective cone over $Y$ with co-normal sheaf $\mathcal{O}_Y(E)$ is
$$
C_p(Y,E):= \proj R(Y,E)[Z],
$$
where the graded ring $R(Y,E)[Z]$ is defined as 
$$
R(Y,E)[Z]:=\bigoplus_{m\geq 0} \left(\bigoplus_{r=0}^m 
H^0(Y,\mathcal{O}_Y(rE))\cdot Z^{m-r}\right).
$$
The variable $Z$ is considered of degree $1$ in the graded 
ring $R(Y,E)[Z].$
\end{itemize}
\end{defn}

For clarity, we also recall the standard definition of a deformation:

\begin{defn}
Let $Y$ be a complex variety. A deformation of $Y$ is a flat holomorphic  map
$\pi: \VV\ra T$ of complex varieties, together with an isomorphism $\pi^{-1}(0)\simeq Y,$ 
for some point $0\in T$. We say that $Y$ is deformed into the nearby fiber 
$\pi^{-1}(t),$ for $t\in T\setminus\{0\}.$
\end{defn}

\begin{proof}[Proof of Theorem \ref{DiffTypeALE}]
Consider the family $\pi:\mathcal Z\ra \CC$ of projective varieties defined by 
restricting the trivial product family $\text{pr}_{\CC}: C_p(X',D') \times \CC \ra \CC$ to the set
$$
\mathcal Z := (S- tZ = 0) \subset \proj R(X',D')[Z] \times \CC_t.
$$
If $t\neq 0,$ then we can use $Z = t^{-1}S$ to 
eliminate $Z$ and find that the fiber $\cal Z_t$ is simply
$\proj R(X',D')$, i.e., $\mathcal Z_t=X'.$
If $t = 0,$ then
 $\mathcal Z_0 = \proj R(X',D')[Z]\cap (S = 0).$  Now the key point of the whole proof is 
 that thanks to Proposition \ref{key-properties}, 
$$\proj R(X',D')[Z]\cap (S = 0) = \proj R(D',D'|_{D'})[Z].$$
Thus, the 
fiber $\cal Z_0$  is  isomorphic to the projective cone  $C_p(D',\OO_{D'}(D')).$ 

Let $\mathcal D\subset \mathcal Z$ be the divisor at infinity $(Z=0),$ and let
$\pi_{\mathcal D}:\mathcal D\ra \CC$  denote the restriction of $\pi$ to $\mathcal{D}$.
As above, we see that the fiber ${\mathcal D}_t$ of $\pi_{\mathcal D}$ 
is  $\proj R(D',D'|_{D'})\simeq D',$ for any $t\in \CC.$

Now let $\phi: \mathcal Z\setminus \mathcal D\ra \CC$ denote the restriction 
of $\pi$ to $ \mathcal Z\setminus \mathcal D.$ The map $\phi$ is flat and,  
by \cite[Proposition ({\bf 8.3.2})]{ega2}, the central fiber $\phi^{-1}(0)=\cal Z_0\setminus\DD_0$ 
is the affine cone $C_a(D',\OO_{D'}(D')).$ Since $R(D',D'|_{D'})\simeq R(D,D|_D),$   
by Theorem \ref{summary} ii) we obtain $\phi^{-1}(0)=\CC^n/G.$ Furthermore, 
for $t\neq 0$ we have 
$$
\phi^{-1}(t)=\proj R(X',D')\setminus\proj R(D',D'|_{D'})= X'\setminus D'.
$$ 
Since, by Proposition \ref{prop-image}, $M = X \setminus D$ is a resolution of singularities of 
$X'\setminus D',$ the proof of Theorem \ref{DiffTypeALE} is now complete.
\end{proof}

\begin{rmk}\label{C*rmk}
The variety $\mathcal Z$ has a natural  $\CC^*$-action given by rescaling 
the variables  $S$ and $t$ by the same factor. With respect to this action, the  
deformation $\pi:\mathcal Z\ra \CC $ is $\CC^*$-equivariant. 
Morever, since the divisor $\mathcal D\subset \mathcal Z$ is fixed by the $\CC^*$-action, 
it follows that the map $\phi: \mathcal Z\setminus \mathcal D\ra \CC$ is $\CC^*$-equivariant 
as well.
\end{rmk}

\section{Applications of deformation theory} 
\label{sec-consequences}

The specialization constructed in the proof of Theorem \ref{DiffTypeALE}
yields several consequences when applied in conjunction with standard results 
in the deformation theory of isolated quotient singularities. 
We proceed by first collecting 
several facts needed in this section.
 
\begin{defn}
\label{def-def}
Let $(S,s_0)$ be a germ of a complex analytic space. A deformation $\VV\ra \SS$ of $(S,s_0)$ 
is called {\it{versal}} if for every other deformation $\WW\ra \TT$ there exists a map $\psi: \TT\ra \SS$ 
such that $\psi^*(\VV)\simeq \WW.$ A versal deformation $\VV\ra \SS$ is called {\it{miniversal}} if the 
induced map between the Zariski tangent spaces of $\TT$ and $\SS$ is uniquely determined 
by the isomorphism class of $\WW.$
\end{defn}

The miniversal deformation of an isolated singularity $(S,s_0)$ exists and 
is unique up to (a non-unique) isomorphism, due to results of Grauert \cite{grauert}, 
Schlessinger \cite{schless-functors} and Elkik \cite{elkik}. Moreover, it is algebraic 
\cite{elkik}. Hence,  the miniversal base space $\SS$ has finitely many components, 
possibly embedded, non-reduced, and of varying dimensions.

In general, 
to pull back a given deformation from a miniversal one, one may need to shrink the 
total space $\WW$ (see \cite[Example 4.5]{artin} or \cite[page 28]{kas-schless}).
In our situation, by Remark \ref{C*rmk}, the map $\phi: \mathcal Z\setminus \mathcal D\ra \CC$ constructed in the 
preceding section is a  $\CC^*$-equivariant deformation of $\CC^n/G$. 
For isolated singularities with $\CC^*$-actions such as $\CC^n/G$, the miniversal 
deformation is $\CC^*$-equivariant \cite{pink}. That means the 
family   $\VV\ra \SS$ is $\CC^*$-equivariant. Moreover, by \cite{pink-cones}, the map 
$\psi$ in Definition \ref{def-def} can be taken 
to be $\CC^*$-equivariant with respect to the $\CC^*$-action on $\mathcal{Z}\setminus\mathcal{D}$. By construction, this action has the crucial property that 
it contracts $\mathcal{Z} \setminus \mathcal{D}$ into arbitrarily small neighborhoods of the singularity of 
the central fiber. This implies that the equivariant map $\psi$ is defined globally.
This point was also noticed and exploited by Kronheimer in a similar context in \cite[page 681]{kron2} 
and in \cite[page 692]{kron3}. 

\begin{proof}[Proof of Corollary \ref{ALEsfK}]
This follows from Theorem \ref{DiffTypeALE} and 
Schlessinger's theorem \cite{schless}, which says that the isolated quotient singularities
$\CC^n/G$ are rigid if  $n\geq 3$. 
\end{proof}

Another immediate consequence of our construction shows that in 
dimension $2$,  the singularities of $X'\setminus D'$ are rather restricted. 

\begin{cor} If $G$ is a finite subgroup of $U(2)$ acting freely on $\CC^2\setminus \{0\},$ 
then $X'\setminus D'$ has at most quotient singularities. Moreover, if $G$ is a 
finite cyclic subgroup of $U(2),$ then $X'\setminus D'$ has at most cyclic quotient 
singularities.
\end{cor}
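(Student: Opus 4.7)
The plan is to combine Theorem A with the deformation theory of $2$-dimensional quotient singularities. By the proof of Theorem A, the affine variety $X'\setminus D'$ appears as the generic fiber of the flat $\CC^*$-equivariant family $\phi:\YY\setminus\mathcal{D}\to\CC$ whose central fiber is identified with $\CC^2/G$. Under the induced $\CC^*$-action (of weight $1$ on the base), each isolated singular point of $X'\setminus D'$ flows towards the unique singular point $0\in\CC^2/G$ as $t\to 0$, so each such singularity can be analyzed as a generic fiber in the germ of a one-parameter deformation of the quotient singularity $(\CC^2/G,0)$.

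The essential input is that this local deformation is $\QQ$-Gorenstein. The total space $\YY\setminus\mathcal{D}$ is an open subset of the normal variety $\proj R(X',D')[Z]$, while the central fiber $\CC^2/G$ has $\QQ$-Cartier canonical divisor of index dividing $|G|$; a short computation using the $\CC^*$-equivariance shows that the same power of $\omega_{\YY/\CC}$ is Cartier in a neighborhood of the central fiber, so $\phi$ is a $\QQ$-Gorenstein deformation in the sense of Koll\'ar and Shepherd-Barron. Granting this, I would invoke their structure theorem on $\QQ$-Gorenstein deformations of $2$-dimensional quotient singularities: the nearby fibers have only quotient singularities, and when the central fiber is a cyclic quotient singularity the classification of $T$-singularities forces every nearby singular point to be cyclic as well. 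This yields both claims of the corollary simultaneously.

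The main obstacle is making the $\QQ$-Gorenstein verification rigorous, since $\YY$ itself can be singular along components of the singular locus of $X'$ and one must control $\omega_{\YY/\CC}$ carefully at those points. A geometric alternative bypasses this machinery entirely by exploiting the projective birational resolution $M\to X'\setminus D'$ from the smooth K\"ahler surface $M$: by Grauert's contractibility criterion, each singularity of $X'\setminus D'$ is the analytic germ obtained by contracting a negative-definite tree of rational curves in $M$, and the $\CC^*$-equivariant specialization to $\CC^2/G$, combined with the classical dual-graph classifications of Brieskorn and Riemenschneider, constrains this tree to be of quotient (resp. cyclic quotient) type. Either path reduces the global problem to a purely local classification of log-terminal surface singularities.
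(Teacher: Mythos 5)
Your setup is right---the whole content of the corollary is that $X'\setminus D'$ arises as a nearby fiber of a deformation of $(\CC^2/G,0)$, which you extract correctly from the proof of Theorem \ref{DiffTypeALE}---but the way you try to close the argument has a real gap. The key input is \emph{not} the Koll\'ar--Shepherd-Barron theory of $\QQ$-Gorenstein deformations and $T$-singularities. The statement you need is that an \emph{arbitrary} flat deformation of a two-dimensional quotient singularity has only quotient singularities in its nearby fibers; this is Riemenschneider's conjecture, proved by Esnault and Viehweg \cite{ev}, and it requires no hypothesis whatsoever on the total space. The cyclic case is likewise covered by \cite[Corollary 7.15]{ksb}, again for arbitrary deformations. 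By routing the argument through a $\QQ$-Gorenstein condition you have (a) introduced a verification (that a fixed power of $\omega_{\YY/\CC}$ is Cartier near the central fiber) which you yourself flag as the ``main obstacle'' and do not carry out, and (b) invoked the KSB structure theory in a context it does not directly address: that theory classifies which quotient singularities admit $\QQ$-Gorenstein smoothings and what the total spaces look like, and turning it into the statement ``nearby fibers have (cyclic) quotient singularities'' would itself require an adjunction/inversion-of-adjunction argument on the threefold $\YY$ that you have not supplied. So as written, the main route does not close.

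The ``geometric alternative'' in your last paragraph is not an argument either: saying that the $\CC^*$-equivariant specialization ``constrains this tree to be of quotient type'' is precisely the content of the Esnault--Viehweg theorem, restated in the language of dual graphs; nothing in Grauert's contractibility criterion or in the Brieskorn--Riemenschneider classification produces that constraint for free. The fix is simple: delete the $\QQ$-Gorenstein discussion and cite \cite{ev} for the first claim and \cite[Corollary 7.15]{ksb} for the cyclic claim, applied to the one-parameter deformation $\phi:\YY\setminus\mathcal{D}\to\CC$ with central fiber $\CC^2/G$ and generic fiber $X'\setminus D'$. That is exactly what the paper does.
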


\begin{proof} 
In the case of an arbitrary finite subgroup of $U(2),$ the result is the 
positive answer given by Esnault and Viehweg \cite{ev} to a conjecture of 
Riemenschneider. The case of a finite cyclic subgroup is Corollary 7.15 in \cite{ksb}. 
\end{proof}
 
\begin{proof}[Proof of Theorem \ref{finiteness}] 
It suffices to show that the number 
of diffeomorphism types of the minimal resolution of the fibers
of  the miniversal deformation of the quotient singularity $\CC^2/G$  
 is finite. 
Let $\psi: \VV\ra \SS$ denote the miniversal deformation of $\CC^2/G.$ 
By the algebraicity of the miniversal deformation space 
\cite{artin, elkik}, $\SS$ has finitely many irreducible components and 
the singular set of $\psi$
is proper over $\SS.$ 
For a generic point $s\in \SS,$ we 
minimally resolve the fiber $\psi^{-1}(s).$
This resolution automatically extends to a simultaneous resolution over a Zariski open subset $\SS^0 \subset \SS$ 
by the flatness of $\psi.$ All fibers over $\SS^0$ are diffeomorphic by Ehresmann's fibration theorem. 
Noetherian induction  yields the desired finiteness.
\end{proof}

Our last result is Corollary \ref{AEK}. Its proof is once again a consequence of
standard deformation theory results applied to the map $\pi:\cal Z\ra \CC.$ 

\begin{proof}[Proof of Corollary \ref{AEK}] Let $M$ be an asymptotically Euclidean 
K\"ahler manifold of complex  dimension $n.$ Since the group acting at infinity 
is trivial, the construction summarized in Theorem \ref{summary} yields a compactification 
divisor $D\simeq \CC\PP^{n-1}$ with normal bundle $\OO_D(D)\simeq\OO_{\CC\PP^{n-1}}(1).$
By Remark \ref{no-prime}, we have 
$$
R(D',\OO_{D'}(D'))\simeq R(D,\OO_{D}(D))\simeq R(\CC\PP^{n-1},\OO_{\CC\PP^{n-1}}(1)),
$$ 
and so its projective cone is $C_p  (\CC\PP^{n-1},\OO_{\CC\PP^{n-1}}(1))\simeq \CC\PP^n.$ 

Consider now the flat map $\pi:\cal Z\ra \CC$ 
constructed in the proof of Theorem \ref{DiffTypeALE}. Then the central fiber $\pi^{-1}(0),$ which is the projective cone   $C_p  (\CC\PP^{n-1},\OO_{\CC\PP^{n-1}}(1)),$ 
is isomorphic to $\CC\PP^n.$ However, as observed in Section \ref{compact-ALE},    
projective space is rigid 
under small deformations. Therefore, the fiber $\pi^{-1}(t)$ must also be isomorphic 
to $\CC\PP^n.$ Corollary \ref{AEK} now follows from Proposition \ref{prop-image}.
\end{proof}

\appendix

\label{appendix}

{

\setcounter{section}{0}
\setcounter{thm}{0}
\renewcommand{\thesection}{\Alph{section}}
\renewcommand{\thethm}{\thesection.\arabic{thm}}

\section{Graded Rings}
\label{it}
\setcounter{thm}{0}

In the context of the present study, we collect in this appendix 
some classical results on graded rings. We include their proofs 
for the convenience of the reader.

\begin{defn}
\label{def-veronese}
A graded ring $R$ is a ring together with a direct sum decomposition 
$$
R = \bigoplus_{n\geq 0} R_n,
$$
where each $R_n$ is an additive subgroup of $R$  and  
$R_mR_n\subset R_{m+n}$ for all $m,n\in \ZZ_{\geq 0}.$ If $d\in\ZZ_{> 0}$, 
the $d$-Veronese graded subring of $R$ is defined as $R^{(d)}=\bigoplus_{n\geq 0} R_{dn}.$  
\end{defn}

By definition, if $R$ as above is a graded ring, then $R_0$ is a ring, each $R_n$ is an $R_0$-module, 
and $R$ is an $R_0$-algebra. Moreover, it can be proved that $1 \in R_0$.

\begin{ex}
If $S$ is an irreducible algebraic variety and if $H$ is a $\QQ$-Cartier Weil divisor on $S$, 
we can form the section ring 
$$ 
R = R(S,H)=\bigoplus_{n\geq 0} H^0(S,\mathcal{O}_S(nH)).
$$
This is a graded ring with $R_0=H^0(S,\OO_S)\simeq \CC.$ If 
$0\neq s\in H^0(S,\mathcal{O}_S(mH))$ and $0\neq t\in H^0(S,\mathcal{O}_S(nH)),$ then 
$s\otimes t \in H^0(S,\mathcal{O}_S((m+n)H))$ is non-zero because $S$ is irreducible.
Hence the section ring $R$ is a domain.
\end{ex} 

\begin{lemma}
\label{veronese-fg}
Let $R = \bigoplus_{n\geq 0} R_n$ be a graded ring. Assume that $R_0$ is Noetherian. 
If $R$ is finitely generated as an $R_0$-algebra, 
then for all $d \in \ZZ_{>0}$, $R^{(d)}$ is finitely generated as an $R_0$-algebra as well.
Conversely, if $R$ is a domain and if $R^{(d)}$ is finitely generated as an $R_0$-algebra for some 
$d \in\ZZ_{>0}$, then $R$ is finitely generated as an $R_0$-algebra.
\end{lemma}

\begin{proof} Let $r_1, \dots, r_p$ be generators of $R$ as an $R_0$-algebra, which we may assume are nonzero and 
homogeneous with positive degrees $d_1,\dots ,d_p.$  
For $i = 1,\ldots, p$ let $\ell_i:=\lcm(d, d_i)$ and consider the elements 
$s_i := r_i^{\ell_i/d_i}\in R^{(d)}.$ Let $R'$ be the $R_0$-subalgebra of $R^{(d)}$ generated by $s_1, \ldots, s_p$. 
Then the finite set of all $r_1^{m_1}\cdots r_p^{m_p}$ with $m_i = 0, \dots, (\ell_i/d_i)-1$ obviously generates $R$ 
as an $R'$-module. In particular, $R$ is finitely generated as an $R^{(d)}$-module. It now follows from 
\cite[Proposition 7.8]{atiyah-mac} that $R^{(d)}$ is finitely generated as an $R_0$-algebra.

Conversely, suppose that $R^{(d)}$ is finitely generated as an
$R_0$-algebra. We will prove that if $R$ is a domain, then $R$ is finitely generated as an $R^{(d)}$-module, 
which obviously implies that $R$ is finitely generated as an $R_0$-algebra. To this end, observe that 
$R = \bigoplus_{m=0}^{d-1} M_m$, where each $M_m :=\bigoplus_{n\geq0}R_{m+nd}$ is an $R^{(d)}$-submodule 
of $R$. Thus, it suffices to prove that $M_m$ is finitely generated as an $R^{(d)}$-module. This is trivial if $M_m = 0.$ 
Otherwise, let $0\neq h\in M_m.$  Since $R$ is a domain, multiplication by $h^{d-1}$ 
induces an injective $R^{(d)}$-linear map $M_m\hookrightarrow R^{(d)}.$
Since $R^{(d)}$ is Noetherian by the Hilbert Basis Theorem, the desired conclusion follows.
\end{proof}

We would like $R$ to be generated by $R_1.$ This is needed in  Proposition \ref{ample-case} 
below, which is an essential ingredient in the proof of Lemma \ref{fingen} 
(see also \cite[Section 2]{ega2}).  The graded ring $R$ 
being generated by degree $1$ homogeneous elements is equivalent to 
$R$ being a graded quotient of a polynomial algebra over $R_0$ with its usual grading, 
i.e., $\proj R$ being embedded as a closed subset in some projective space over 
$R_0.$ The following standard procedure shows how to 
overcome this difficulty and ensure generation by homogeneous elements of 
degree $1$.

\begin{prop}
\label{deg1-gen}
Let $R = \bigoplus_{n\geq 0} R_n$ be a graded ring. Assume that $R$ is finitely generated as an $R_0$-algebra.
Then for all $d \in \ZZ_{>0}$ that are sufficiently divisible, $R^{(d)}$ is generated by a finite subset of $R_d$ as an $R_0$-algebra.
\end{prop}
 
\begin{proof} Let $r_1,\dots,r_p$ 
be algebra generators of $R$ over $R_0.$ We can assume that these are homogeneous 
with positive degrees $e_1,\dots ,e_p.$ Let $e$ be an arbitrary positive integer multiple of $\text{lcm}(e_1, \dots, e_p)$ 
and let $d := pe$. We will prove by induction that for all $n \in \ZZ_{> 0}$, every element $s \in R_{nd}$ can be written 
as a polynomial with $R_0$-coefficients in terms of elements of $R_d$. This is trivial for $n = 1$. Assuming the statement 
holds for some $n$, let $s \in R_{(n+1)d}$. By assumption, $s$ can be written as a polynomial with $R_0$-coefficients 
in terms of $r_1, \dots, r_p$. Each monomial has degree $(n+1)d > pe$, so in each monomial at least one of the $r_i$ 
must occur to some power $> e/e_i$. Define $s_i := r_i^{e/e_i}\in R_{e}$. It follows that each monomial can be written 
as $s_i$ (for some $i = 1, \dots, p$) times an element of $R_{(n+1)d-e}$. We apply this argument recursively $p$ times, 
which is possible because $(n+1)d - (p-1)e$ is still greater than $pe$. In this way, each monomial is seen to be of the 
form $s_1^{a_1} \cdots s_p^{a_p} t$ with $a_1 + \cdots + a_p = p$ and $t \in R_{(n+1)d - ep} = R_{nd}$.  
Because $s_1^{a_1}\cdots s_p^{a_p} \in R_{ep} = R_d$, we can now complete the inductive step by applying the 
inductive hypothesis.

Note that $R_d$ is finitely generated as an $R_0$-module. Choose an arbitrary finite set of $R_0$-module generators 
of $R_d$ and add elements of the form $s_1^{a_1}\cdots s_p^{a_p}$ with $a_1 + \cdots + a_p = p$ to this set if necessary. 
Then the above argument shows that the resulting finite subset of $R_d$ generates $R^{(d)}$ as an $R_0$-algebra.
\end{proof} 

\begin{rmk}
\label{deg1}
We can restate Proposition \ref{deg1-gen} as saying that
$R^{(d)}=\bigoplus_{n\geq 0}R_{nd}$
is generated as an $R_0$-algebra by finitely many elements of degree $1.$ 
\end{rmk} 

\begin{prop}\emph{(}\cite[Proposition ({\bf 2.4.7})]{ega2}\emph{)}
\label{veronese-proj}
Let $R=\bigoplus_{n\geq 0}R_{n}$ be a graded ring. For every integer $d,$ 
there exists a canonical isomorphism $\proj R\simeq \proj R^{(d)}.$
\end{prop}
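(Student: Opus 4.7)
The plan is to construct the claimed isomorphism by covering both schemes with a common system of distinguished affine opens and checking that the natural identifications of the affine rings on each piece glue to a global scheme isomorphism.

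First I would fix a positive integer $d$ and observe that the distinguished opens $D_+(f)$ indexed by homogeneous $f \in R^{(d)}_+ = \bigoplus_{n>0} R_{nd}$ cover $\proj R$. Indeed, for any $\mathfrak{p} \in \proj R$ there is a homogeneous $g \in R_+$ with $g \notin \mathfrak{p}$; then $g^d \in R^{(d)}_+$ and $g^d \notin \mathfrak{p}$, so $\mathfrak{p} \in D_+(g^d)$. The same family obviously covers $\proj R^{(d)}$, where each $f \in R_{kd}$ is regarded as an element of $R^{(d)}$ of degree $k$.

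Second, for each such $f \in R_{kd}$ with $k > 0$, I would produce a canonical identification of the affine coordinate rings $R_{(f)} = (R^{(d)})_{(f)}$. A homogeneous element of degree $0$ in the localization $R_f$ has the form $a/f^m$ with $a \in R_{mkd}$; but $R_{mkd} = R^{(d)}_{mk}$, so $a$ already lies in $R^{(d)}$, and the same description yields $(R^{(d)})_{(f)}$ on the other side. The equality of the two rings is then tautological.

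Third, I would check compatibility with intersections: for $f, g \in R^{(d)}_+$ we have $D_+(f) \cap D_+(g) = D_+(fg)$ in both $\proj R$ and $\proj R^{(d)}$, and the localization-of-localization description shows that the two identifications $R_{(f)} = (R^{(d)})_{(f)}$ and $R_{(g)} = (R^{(d)})_{(g)}$ agree on the common open $D_+(fg)$. The affine isomorphisms therefore glue to a global scheme isomorphism $\proj R \xrightarrow{\sim} \proj R^{(d)}$, which is canonical since it is induced by the inclusion $R^{(d)} \hookrightarrow R$.

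There is no genuine obstacle here: the only thing to be careful about is bookkeeping, namely that an element $f \in R_{kd}$ carries \emph{two} degrees (degree $kd$ in $R$, degree $k$ in the regrading of $R^{(d)}$), and that both notions of $D_+(f)$ and both degree-zero localizations agree once this is tracked. The argument is purely formal and proceeds identically to the classical treatment in \cite[\S{\bf 2.4}]{ega2}.
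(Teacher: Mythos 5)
Your proposal is correct and is precisely the standard argument: cover both schemes by the distinguished opens $D_+(f)$ for homogeneous $f\in R^{(d)}_+$, identify the degree-zero localizations $R_{(f)}=(R^{(d)})_{(f)}$, and glue. The paper itself gives no proof, deferring entirely to \cite[Proposition ({\bf 2.4.7})]{ega2}, and your argument is essentially the one found there (with the harmless implicit restriction to $d>0$, as in the paper's definition of $R^{(d)}$).
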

\begin{proof}
See {\it{loc.cit}}.
\end{proof}

\begin{prop}
\label{ample-case}
Let $S$ be a normal, connected projective variety. Let $H$ be an ample $\mathbb{Q}$-Cartier Weil divisor on $S.$ 
Then the section ring $R(S,H)$ is finitely generated and there exists a canonical isomorphism 
$$
S\simeq \proj R(S,H).
$$
\end{prop}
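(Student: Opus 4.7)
The plan is to reduce to the case of an ample Cartier divisor and then invoke standard projective geometry. Since $H$ is ample $\QQ$-Cartier, choose a positive integer $k$ such that $L:=\OO_S(kH)$ is an honest ample line bundle. The Veronese subring $R(S,H)^{(k)}$ is naturally identified, after rescaling degrees, with the section ring $R(S,L)$. By Proposition \ref{veronese-proj} we have $\proj R(S,H)\simeq \proj R(S,H)^{(k)}\simeq \proj R(S,L)$, and by Lemma \ref{veronese-fg}---applicable since $R(S,H)$ is a domain with $R_0=H^0(S,\OO_S)\simeq \CC$---finite generation of $R(S,H)$ is equivalent to finite generation of $R(S,L)$. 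So it suffices to prove the statement for an ample Cartier divisor, and after one further Veronese reduction I may assume $L$ is very ample.

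Under that assumption, $L$ defines a closed embedding $\iota: S\hookrightarrow \PP^N=\proj\mathrm{Sym}^{\bullet}H^0(S,L)^*$. The natural graded ring map $\alpha:\mathrm{Sym}^{\bullet}H^0(S,L)\to R(S,L)$ has image the homogeneous coordinate ring $A$ of $\iota(S)$, a finitely generated $\CC$-algebra. The key point is that the inclusion $A\subseteq R(S,L)$ is a finite integral extension: because $S$ is normal, the affine cone $\spec R(S,L)$ is the normalization of $\spec A$, and since the latter is a finitely generated $\CC$-algebra with the same field of fractions as $R(S,L)$, standard commutative algebra gives that $R(S,L)$ is module-finite over $A$. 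This yields the finite generation of $R(S,L)$, and hence of $R(S,H)$ by the Veronese equivalence above.

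To establish $S\simeq \proj R(S,L)$, the finite inclusion $A\hookrightarrow R(S,L)$ induces a finite morphism $\proj R(S,L)\to \proj A\simeq \iota(S)\simeq S$, which is the normalization of $S$; since $S$ is already normal, this morphism is an isomorphism. The main technical obstacle lies in identifying $R(S,L)$ precisely as the integral closure of $A$ inside its fraction field: one needs to argue that every section of $L^{\otimes m}$ on $S$ corresponds to a regular function on the normalization of $\spec A$, which ultimately relies on Serre vanishing ($H^i(S,L^{\otimes m})=0$ for $i>0$ and $m\gg 0$) together with the ampleness of $L$ to ensure projective normality in high enough Veronese embeddings. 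These are precisely the standard results worked out in \cite[\S 2--\S 3]{ega2}, to which the paper defers.
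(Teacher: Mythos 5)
Your argument is correct and follows essentially the same route as the paper: reduce to an ample Cartier divisor via the Veronese lemmas (Lemma \ref{veronese-fg} and Proposition \ref{veronese-proj}), embed $S$ by a suitable power of $H$, and compare the section ring with the homogeneous coordinate ring using the standard facts of \cite[Ch.~II, Ex.~5.14]{hartshorne} and \cite{ega2}. The only (minor) divergence is that the paper chooses $d$ so large that the embedding is already projectively normal, making $R(S,H^{\otimes d})$ literally equal to the homogeneous coordinate ring, whereas you take an arbitrary very ample power, identify $R(S,L)$ as the module-finite normalization of the coordinate ring $A$, and conclude via the fact that a finite birational morphism onto the normal variety $S$ is an isomorphism; both variants defer the same technical content to the same references.
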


\begin{proof} Let $d$ be a large enough positive integer such 
that the $\QQ$-line bundle $H^{\otimes d}$ is an honest line bundle, is very ample, and defines a projectively 
normal embedding of $S$. For the existence of such a $d$, see 
\cite[Chapter II, Exercise 5.14]{hartshorne}. 
Let $\iota: S\ra \PP(H^0(S,H^{\otimes d})^*)=\PP^N$  be the embedding of $S$ 
given by a basis of sections of $H^{\otimes d},$ and let $S'=\iota(S).$ Since 
$H^{\otimes d}\simeq \iota^*\OO_{\PP^N}(1)$, we have
an isomorphism of $\CC$-algebras $R(S,H^{\otimes d})\simeq R(S',\OO_{S'}(1)).$
Since $S'$ is projectively normal in $\PP^N,$ using again \cite[Chapter II, Exercise 5.14]{hartshorne}, 
we see that the section ring $R(S',\OO_{S'}(1))$ is a quotient of the 
homogeneous coordinate ring of $\PP^N$ and we have an isomorphism 
$S'\simeq \proj  R(S',\OO_{S'}(1))\simeq \proj R(S,H^{\otimes d}).$ 
In particular, $R(S',\OO_{S'}(1))$ is finitely generated and so is $R(S,H^{\otimes d}).$ 
Hence, by  Lemma \ref{veronese-fg}, we can conclude that $R(S,H)$ is finitely 
generated.  Furthermore, by Proposition \ref{veronese-proj}, we have 
$S\simeq \proj R(S,H).$
\end{proof}

}

\providecommand{\bysame}{\leavevmode\hbox
to3em{\hrulefill}\thinspace}


\begin{thebibliography}{Dyn52b}

\bibitem{a-r}
{\sc V.~Apostolov, Y.~Rollin,} 
{\em ALE scalar-flat K\"ahler metrics on non-compact weighted projective spaces.}
Math. Ann. {\bf 367} (2017), no. 3--4, 1685--1726.

\bibitem{artin}
{\sc M.~Artin,} 
{\em Versal deformations and algebraic stacks.} 
Invent. Math. {\bf 27} (1974), 165--189.

\bibitem{atiyah-mac}
{\sc M.~F.~Atiyah, I.~G.~MacDonald,}
{\em Introduction to commutative algebra.}
Addison-Wesley, Reading, MA, 1969.

\bibitem{baily}
{\sc W.~L.~Baily,} 
{\em On the imbedding of V-manifolds in projective space.} 
Amer. J. Math. {\bf 79} (1957), 403--430. 

\bibitem{bhpv}
{\sc W.~P.~Barth, K.~Hulek, C.~A.~M.~Peters, A.~Van de Ven,} 
{\em Compact complex surfaces. Second edition.} 
Ergebnisse der Mathematik und ihrer Grenzgebiete (3), Vol. {\bf 4}. Springer-Verlag, Berlin, 2004.

\bibitem{calabi1}
{\sc E.~Calabi,} 
{\em M\'etriques k\"ahl\'eriennes et fibr\'es holomorphes,} 
Ann. Sci. \'Ecole Norm. Sup. (4) {\bf 12} (1979), 269--294. 

\bibitem{ca-si}
{\sc D.~Calderbank, M.~Singer, }
{\em Einstein metrics and complex singularities.}
Invent. Math. {\bf 156} (2004), no. 2, 405--443.

\bibitem{clw}
{\sc X.-X.~Chen, C.~LeBrun, B.~Weber,}
{\em On conformally K\"ahler, Einstein manifolds.} 
J. Amer. Math. Soc. {\bf 21} (2008), no. 4, 1137--1168.

\bibitem{conlon-hein-2}
{\sc R.~J.~Conlon, H.-J.~Hein,} 
{\em Asymptotically conical Calabi-Yau metrics on quasi-projective varieties.} 
Geom. Funct. Anal. {\bf 25} (2015), no. 2, 517--552.

\bibitem{conlon-hein-3}
{\sc R.~J.~Conlon, H.-J.~Hein,}
{\em Asymptotically conical Calabi-Yau manifolds, III.}
arXiv:1405.7140v3 [math.DG].

\bibitem{elkik}
{\sc R.~Elkik,} 
{\em Solutions d'\'equations \`a coefficients dans un anneau hens\'elien.}
Ann. Sci. \'Ecole Norm. Sup. (4) {\bf 6} (1973), 553--603.

\bibitem{ev}
{\sc H.~Esnault, E.~Viehweg,} 
{\em Two-dimensional quotient singularities deform to quotient singularities.} 
Math. Ann. {\bf 271} (1985), no. 3, 439--449.

\bibitem{g-h}
{\sc G.~W.~Gibbons, S.~W.~Hawking,}
{\em Classification of gravitational instanton symmetries.}
Comm. Math. Phys. {\bf 66} (1979), 291--310. 

\bibitem{grauert}
{\sc H.~Grauert,}
{\em \"Uber die Deformation isolierter Singularit\"aten analytischer Mengen.}
Invent. Math. {\bf 15} (1972), 171--198. 

\bibitem{ega2}
{\sc A.~Grothendieck,} 
{\em \'El\'ements de g\'eom\'etrie alg\'ebrique, II. } 
Inst. Hautes \'Etudes Sci. Publ. Math. {\bf 8} (1961), 5--222.

\bibitem{hamilton}
{\sc R.~S.~Hamilton},
{\em Deformation of complex structures on manifolds with boundary. I. The stable case.}
J. Differential Geom. {\bf 12} (1977), no. 1, 1--45. 

\bibitem{h-v-def}
{\sc J.~Han, J.~A.~Viaclovsky,}
{\em Deformation theory of scalar-flat K\"ahler ALE surfaces.} Amer. J. Math. {\bf 141} (2019), no. 6, 1547--1589.

\bibitem{h-v-existence}
{\sc J.~Han, J.~A.~Viaclovsky,}
{\em Existence and compactness theory for ALE scalar-flat K\"ahler surfaces.} 
arXiv:1901.05611v2 [math.DG].

\bibitem{hartshorne}
{\sc R.~Hartshorne,} 
{\em Algebraic geometry.} 
Graduate Texts in Mathematics, No. {\bf 52}, Springer-Verlag, New York-Heidelberg, 1977.

\bibitem{har-ample}
{\sc R.~Hartshorne,} 
{\em Ample subvarieties of algebraic varieties.} Notes written in collaboration with C. Musili. 
Lecture Notes in Mathematics, Vol. {\bf 156}, Springer-Verlag, Berlin-New York 1970.

\bibitem{hhn}
{\sc  M.~Haskins, H.-J.~Hein, J.~Nordstr\"om,} 
{Asymptotically cylindrical Calabi-Yau manifolds.} 
J. Differential Geom. {\bf 101} (2015), no. 2, 213--265.

\bibitem{hein-lebrun}
{\sc H.-J.~Hein, C.~LeBrun}
{\em Mass in K\"ahler Geometry.} 
Comm. Math. Phys. {\bf 347} (2016), 183--221.

\bibitem{hitchin}
{\sc N.~Hitchin,} 
{\em Polygons and gravitons.} 
Math. Proc. Cambridge Philos. Soc. {\bf 85} (1979), 46--476.

\bibitem{honda1}
{\sc N.~Honda,} 
{\em Deformation of LeBrun's ALE metrics with negative mass.} 
Comm. Math. Phys. {\bf 322} (2013), no. 1, 127--148.

\bibitem{honda2}
{\sc N.~Honda,} 
{\em Scalar flat K\"ahler metrics on affine bundles over $\CC\PP^1$}.
SIGMA Symmetry Integrability Geom. Methods Appl. {\bf 10} (2014), Paper 046.
 
\bibitem{joyce}
{\sc D.~D.~Joyce,} 
{\em Explicit construction of self-dual $4-$manifolds.}
Duke Math. J. {\bf 77} (1995), 519--552.

\bibitem{joyce2}
{\sc D.~D.~Joyce,} 
{\em Asymptotically locally Euclidean metrics with holonomy $SU(m).$ }
Ann. Global Anal. Geom. {\bf 19} (2001), no. 1, 55--73. 

\bibitem{joyce-book}
{\sc D.~D.~Joyce,} 
{\em Compact manifolds with special holonomy.} 
Oxford Mathematical Monographs. Oxford University Press, Oxford, 2000.

\bibitem{kas-schless}
{\sc A.~Kas, M.~Schlessinger,} 
{\em On the versal deformation of a complex space with an isolated singularity.} 
Math. Ann. {\bf 196} (1972), 23--29.

\bibitem{kodaira}
{\sc K.~Kodaira,} 
{\em A theorem of completeness of characteristic systems for analytic families of 
compact submanifolds of complex manifolds.} 
Ann. of Math. (2) {\bf 75} (1962), 146--162.

\bibitem{ksb}
{\sc J.~Koll\'ar, N.~I.~Shepherd-Barron, }
{\em Threefolds and deformations of surface singularities,}
Invent. Math. {\bf 91} (1988), no. 2, 299--338.

\bibitem{km}
{\sc J.~Koll\' ar, S.~Mori,}
{Birational geometry of algebraic varieties,}
Cambridge University Press, 
Cambridge, 1998.

\bibitem{kk}
{\sc J.~Koll\'ar,}
{\em Singularities of the minimal model program. With a collaboration of S\'andor Kov\'acs.} 
Cambridge Tracts in Mathematics, {\bf 200}. Cambridge University Press, Cambridge, 2013.

\bibitem{kron2}
{\sc P.~B.~Kronheimer}, 
{\em The construction of ALE spaces as hyper-K\"ahler quotients,}
J. Differential Geom. {\bf 29} (1989), no. 3, 665--683.

\bibitem{kron3}
{\sc P.~B.~Kronheimer}, 
{\em A Torelli-type theorem for gravitational instantons,}
J. Differential Geom. {\bf 29} (1989), no. 3, 685--697.

\bibitem{lebrun-mass}
{\sc C.~LeBrun,}
{\em Counter-examples to the generalized positive action conjecture.} 
Comm. Math. Phys. {\bf 118} (1988), no. 4, 591--596. 

\bibitem{lebrun-ansatz}
{\sc C.~LeBrun,}
{\em  Explicit self-dual metrics on $\CC\PP^2\#\cdots\#\CC\PP^2,$} 
J. Differential Geom. {\bf 34} (1991), 223--253.

\bibitem{lebrun-maskit}
{\sc C.~LeBrun,  B.~Maskit,}
{\em On optimal 4-dimensional metrics.}
J. Geom. Anal. {\bf 18} (2008), no. 2, 537--564. 

\bibitem{chili}
{\sc C.~Li,}
{\em On sharp rates and analytic compactifications of asymptotically conical K\"ahler metrics.} 
arXiv:1405.2433v5 [math.DG], to appear in Duke Math. J.
 
\bibitem{lock-viaclovsky}
{\sc M.~T.~Lock, J.~A.~Viaclovsky}
{\em A sm\"org\aa sbord of scalar-flat K\"ahler ALE surfaces.}
J.~reine angew.~Math.~{\bf 746} (2019), 171--208.

\bibitem{ma-ma}
{\sc X.~Ma, G.~Marinescu,}
{\em Holomorphic Morse inequalities and Bergman kernels.} 
Birkh\"auser Verlag, Basel, 2007.
 
\bibitem{mumford}
{\sc D.~Mumford,} 
{\em Abelian varieties.} 
With appendices by C.~P.~Ramanujam and Yuri Manin. 
Corrected reprint of the second (1974) edition. 
Tata Institute of Fundamental Research Studies in Mathematics, {\bf 5}. 
Published for the Tata Institute of Fundamental Research, Bombay; 
by Hindustan Book Agency, New Delhi, 2008.

\bibitem{namikawa}
{\sc Y.~Namikawa,} 
{\em Projectivity criterion of Moishezon spaces and density of projective symplectic varieties.} 
Internat. J. Math. {\bf 13} (2002), no. 2, 125--135.
 
\bibitem{pink}
{\sc H.~C.~Pinkham,}
{\em Deformations of algebraic varieties with $G_m$ action.}
Ast\'erisque, No. {\bf 20}. Soci\'et\'e Math\'ematique de France, Paris, 1974.
  
\bibitem{pink-cones}
{\sc H.~C.~Pinkham,}
{\em Deformations of normal surface singularities with $\CC^*$-action.} 
Math. Ann. {\bf 232} (1978), no. 1, 65--84. 
  
\bibitem{schless-functors}
{\sc M.~Schlessinger,} 
{\em Functors of Artin rings.}
Trans. Amer. Math. Soc. {\bf 130} (1968), 208--222.
  
\bibitem{schless}
{\sc M.~Schlessinger}, 
{\em Rigidity of quotient qingularities.} 
Invent. Math. {\bf 14} (1971), 17--26. 
 
 \bibitem{simanca}
{\sc S.~R.~Simanca,} 
{\em K\"ahler metrics of constant scalar curvature on bundles over $\CC\PP^{n-1}$}.
Math. Ann. {\bf 291} (1991), no. 2, 239--246.

\bibitem{ALE}
{\sc I.~\c Suvaina,}
{\em ALE Ricci-flat K\"ahler metrics and deformations of quotient surface singularities.}
Ann. Global Anal. Geom.  {\bf 41} (2012), no. 1, 109--123.

\bibitem{tian}
{\sc G.~Tian,}
{\em Compactness theorems for K\"ahler-Einstein manifolds of dimension 3 and up.} 
J. Differential Geom. {\bf 35} (1992), no. 3, 535--558.

\bibitem{ty2}
{\sc G.~Tian, S.-T.~Yau,}
{\em Complete K\"ahler manifolds with zero Ricci curvature II.}
Invent. Math. {\bf 106} (1991), no. 1, 27--60.

\bibitem{wright}
{\sc E.~P.~Wright,} 
{\em Quotients of gravitational instantons.} 
Ann. Global Anal. Geom. {\bf 41} (2012), no. 1, 91--108.

\bibitem{zar}
{\sc O.~Zariski,}
{\em The theorem of Riemann-Roch for high multiples of an effective divisor on an algebraic surface.} 
Ann. of Math. (2) {\bf 76} (1962), 560--615.

\end{thebibliography}
\end{document}